\newcommand{\E}{{\mathbb E}}
\newcommand{\N}{{\mathbb N}}
\renewcommand{\P}{{\mathbb P}}
\renewcommand{\d}{{\mathrm d}}
\newcommand{\Q}{{\mathbb Q}}
\newcommand{\R}{{\mathbb R}}
\newcommand{\Gcal}{{\mathcal G}}
\newcommand{\Fcal}{{\mathcal F}}
\newcommand{\Pcal}{{\mathcal P}}
\newcommand{\Tcal}{{\mathcal T}}
\newcommand{\outerNatural}{{\mu}}
\newcommand{\outerLocal}{{\nu}}
\begin{document}

%%%%%%%%%%%%%%%%%%%%%%%%%%%%%%%%%%%%%%%%%%%%%%%%%%%%%%%%%%%%%%%%%%%
%%                                                               %%
%% No need for \maketitle.                                       %%
%%                                                               %%
%%%%%%%%%%%%%%%%%%%%%%%%%%%%%%%%%%%%%%%%%%%%%%%%%%%%%%%%%%%%%%%%%%%

%%%%%%%%%%%%%%%%%%%%%%%%%%%%%%%%%%%%%%%%%%%%%%%%%%%%%%%%%%%%%%%%%%%
%%                                                               %%
%% Please replace what follows by the body of your article       %%
%% (up to the bibliography):                                     %%
%%                                                               %%
%%%%%%%%%%%%%%%%%%%%%%%%%%%%%%%%%%%%%%%%%%%%%%%%%%%%%%%%%%%%%%%%%%%

\section{Introduction}
In his PhD thesis, Ville \cite{ville_etude_1939} obtained a martingale characterization of the zero probability events on the filtered probability space generated by an infinite sequence of independent unbiased coin flips. Informally, Ville proved that
\begin{quote} an event $A$ has probability $\P(A) = 0$ if and only if there exists a nonnegative $\P$-martingale $M = (M_t)_{t \in \N_0}$ which starts at one and diverges to infinity on $A$. 
\end{quote}
Here, $\P$ is the probability measure under which the coins are independent and unbiased. 

In game-theoretic terms, Ville proved that for any event $A$ having probability zero, one can construct an explicit betting strategy, which starts at unit wealth and never risks more wealth than it has, such that the wealth increases to infinity if that event $A$ occurs. Via this seminal result, applied specifically to the law of the iterated logarithm (LIL), Ville was instrumental in bringing martingales into the forefront of probability theory. Ville's result is also central to the philosophy of probability theory, because it gives an actionable and concrete interpretation to the relatively abstract concept of a measure-zero event.  Different measure-zero events --- for example, sequences violating the strong law of large numbers (SLLN) and those violating the LIL --- obviously have the same probability, but they result in  different betting strategies.

Although Ville proved his theorem for binary sequences, his result turns out to be far more general, holding true in general filtered spaces. There is also a version for events of positive measure: an event $A$ has $\mathbb{P}$-measure at most $a$ if and only if there exists a $\mathbb{P}$-martingale which starts at one and exceeds any level below $1/a$ if the event $A$ occurs. See Chapter~8.5 in the book by Shafer and Vovk~\cite{shafer2001probability} for a detailed treatment and proof.

Ville's result has implications for sequential hypothesis testing: the nonnegative $\P$-martingale $M$ can be used to construct a sequential test for the null hypothesis that the data sequence was generated according to $\P$. For any $\alpha \in (0,1)$, the test which rejects the null the first time that $M_t \ge 1/\alpha$ has type-I error at most $\alpha$:
\begin{equation}\label{eq:ville}
\P\left( M_t \ge \frac{1}{\alpha} \text{ for some } t \in \N \right) \le \alpha.
\end{equation}
This inequality,  which was proved in the same thesis, is commonly referred to as \emph{Ville's inequality}. Doob, who read Ville's thesis, realized the importance of  martingales and credited this particular inequality to Ville; see Doob~\cite{doob_regularity_1940}.

We briefly mention how this work connects to modern developments in sequential statistics. The idea of constructing nonnegative test statistics that are martingales under the null hypothesis, but increase under relevant alternative hypotheses, has long (implicitly or explicitly) been a cornerstone of sequential statistics; indeed, the sequential probability ratio test by Wald~\cite{wald_sequential_1945} is based on the same principle (sequential likelihood ratios are easily checked to be nonnegative martingales). Following initial work by Robbins~\cite{robbins_statistical_1970}, this idea has been extended to many different nonparametric settings in recent years~\cite{howard_exponential_2018,howard_uniform_2019}; here, %likelihood ratios methods seem impossible to apply due to a lack of a reference measure, but 
nonnegative \emph{super}martingales provide a natural and powerful generalization of likelihood ratios.
% , the latter being hard to define due to the lack of a common reference measure. 
There are yet other problems for which even nonnegative supermartingales do not suffice, and one encounters a new concept called an e-process~\cite{grunwald_safe_2019,ramdas2020admissible,ramdas2021testing}. We revisit this notion very soon since it plays a fundamental role in the current paper. The aforementioned discoveries underpin a ``game-theoretic'' approach to statistics, which in particular yields hypothesis tests and confidence sets that can be continuously monitored and adaptively stopped; see the recent survey~\cite{ramdas2022game}  and  references therein for an elaboration on such ``safe anytime-valid inference.''

\paragraph{Our contributions.} In this paper we develop a \emph{composite} version of Ville's theorem, where the single probability measure $\P$ is replaced by an entire family $\Pcal$ of probability measures (and the space of coin flips is replaced by a general filtered measurable space).
In this extended setting, two issues must be addressed: 
\begin{enumerate}
    \item First, one needs an appropriate analog of a zero probability event.
    \item Second, one needs an appropriate analog of a martingale. 
\end{enumerate}

A natural replacement for the notion of a $\P$-nullset would be that of a \emph{$\Pcal$-polar set}, which is an event $A$ such that $\P(A) = 0$ for all $\P \in \Pcal$. However, this turns out to be too weak. The appropriate replacement relies on a certain outer measure $\outerLocal \equiv \outerLocal_\Pcal$ introduced in Section~\ref{S_outer_measure}. We call this the \emph{inverse capital measure}, because for any set $A$, the inverse of $\outerLocal(A)$ specifies an asymptotic lower bound on the limiting capital achievable by a gambler (in a particular set of games) starting with an initial capital of one and betting against $\Pcal$, whenever $A$ occurs. 

Next, one may attempt to work with processes that are $\P$-martingales for every $\P \in \Pcal$. However, this requirement turns out to be too strong, and the appropriate replacement is the notion of an \emph{e-process}~\cite{grunwald_safe_2019,ramdas2020admissible,ramdas2021testing}. An e-process for a family $\Pcal$ of probability measures, called a $\Pcal$-e-process for brevity, is a nonnegative process $E = (E_t)_{t \in \N_0}$, adapted to some filtration $(\mathcal F_t)_{t \in \N_0}$, such that 
\begin{equation}\label{def:e-process}
\text{$\E_\P[ E_\tau ] \le 1$ for every $\P \in \Pcal$ and every stopping time $\tau$, }
\end{equation}
where $E_\infty = \limsup_{t \to \infty} E_t$ by convention. (The stopping times above are also defined with respect to $(\mathcal F_t)_{t \in \N_0}$.) 

This is a much weaker property than being a nonnegative $\P$-martingale starting at one for every $\P \in \Pcal$. In particular, the martingale property implies the e-process property thanks to the optional stopping theorem.   An equivalent definition~\cite{ramdas2020admissible} is: an e-process is any nonnegative process that is upper bounded, for every $\P \in \Pcal$, by a $\P$-martingale starting at one (a different one for each $\P$). This style of definition was previously used by Howard et al.~\cite{howard_exponential_2018}, but without using the term e-process, which is a very recent christening of this property. As a consequence of this alternative characterization, e-processes also satisfy Ville's inequality~\eqref{eq:ville}, making their magnitude interpretable and actionable as a measure of evidence.\\

Using the above two notions --- that is, e-processes and the inverse capital measure --- we derive a composite version of Ville's theorem. Specifically, we prove that
\begin{quote}
    an event $A$ has inverse capital measure zero if and only if there exists an e-process that grows to infinity whenever $A$ occurs. 
\end{quote}
We also prove an analogous statement for events with positive inverse capital measure.

E-processes have recently been used to solve a number of problem in sequential statistics, because there exist problems in which there are no nontrivial $\Pcal$-martingales (other than constants), there are no nontrivial $\Pcal$-supermartingales (other than decreasing processes), but there do exist nontrivial $\Pcal$-e-processes which grow to infinity when evaluated on data that is sufficiently incompatible with $\Pcal$. A particularly interesting and fundamental problem where the preceding observations arise, is to test whether a sequence is exchangeable or not \cite{ramdas2021testing}.
A game-theoretic interpretation for e-processes can be found in the aforementioned work, and applications to meta-analysis of medical studies were described in Gr\"unwald et al.~\cite{grunwald_safe_2019}. A thorough treatment of game-theoretic probability can be found in the book by Shafer and Vovk~\cite{shafer2019game}, and a recent case for its use in scientific communication was made in the discussion paper by Shafer~\cite{shafer2019language}.

Our composite version of Ville's theorem generalizes his original result.
Moreover, unpacking the proof of the theorem we see that it constructs relatively explicit gambling strategies for arbitrary inverse capital measure-zero events that gain infinite wealth when the purportedly rare events actually do occur.  
In order to demonstrate a nontrivial example of its applicability, we prove a new \emph{composite strong law of large numbers}. Here the inverse capital measure-zero event $A$ is the set of sequences whose empirical averages do not converge to a limit. In other words, it is possible to \emph{test} whether such a composite SLLN holds, with the evidence (e-process) growing to infinity whenever the SLLN fails. Moreover, we prove that there is no nonnegative martingale or supermartingale that achieves this; the concept of an e-process is necessary for this example.

Proving such a composite strong law is itself nontrivial. Along the way, we develop a new $L^1$-variant of the celebrated $L^2$-type Dubins-Savage inequality. Such inequalities have been extended to settings with finite $p$-th moments for $1<p<2$ by Khan~\cite{khan_$l_p$-version_2009}, and also to higher moments~\cite{howard_exponential_2018}, but the $L^1$-type inequality appears to be new and of independent interest.

To summarize, our main contributions are the following:
\begin{itemize}
\item Constructing a composite generalization of Ville's theorem which informally reads as: for every event of \emph{inverse capital} measure equal to $a \in [0,1]$, we can construct an \emph{e-process} that reaches arbitrarily close to $1/a$ if that event occurs.
\item Deriving a new, $L^1$-type line-crossing inequality, which is an analog of the $L^2$-type Dubins--Savage deviation inequality.
\item Developing a new composite strong law of large numbers, for which there is no martingale or supermartingale that can detect its failure, but there exists an explicit e-process that can.
\end{itemize}

To conclude, it is worth remarking the following. Gambling strategies were called martingales in the 19th century, but Ville switched the term's use to characterize the wealth of the gambler, rather than the gambling strategy itself \cite{mazliak2022splendors}. In some sense, martingales were thrust into the limelight by Ville's result, giving them a central role in modern ``measure-theoretic'' probability theory, and also demystifying measure zero events to some extent. In a similar sense, our generalization gives an independent justification for the claim that the e-process is an equally fundamental concept for dealing with composite nulls (as martingales are for point nulls): the e-process arises organically when one tries to generalize Ville's seminal result to the composite setting.

\section{The inverse capital measure} \label{S_outer_measure}
Consider a filtered measurable space $(\Omega,\Fcal,(\Fcal_t)_{t \in \N})$ where 
$\Fcal = \sigma(\bigcup_{t \in \N} \Fcal_t)$, and let $\Tcal$ denote the set of all stopping times with respect to this filtration. Fix an arbitrary set $\Pcal$ of probability measures on $\Fcal$ and define a $[0,1]$-valued set function $\outerLocal$ by 
\begin{equation}\label{eq:inv-cap-outer-m}
\outerLocal(A) = \inf_{\tau \in \Tcal: ~ A \subseteq \{\tau < \infty\}}  ~ \sup_{\P \in \Pcal} ~ \P(\tau < \infty),
\end{equation}
for any subset $A \subseteq \Omega$. We call this the \emph{inverse capital} measure in anticipation of its later role in the composite generalization of Ville's theorem. Note that $\outerLocal$ depends both on the filtration $(\Fcal_t)_{t \in \N}$ and the set $\Pcal$.

To interpret $\outerLocal(A)$ one should think of $A$ as a \emph{failure event}. An example that we study in detail below is the event that a certain adapted process $(\overline X_t)_{t \in \N}$ fails to converge as $t$ tends to $\infty$. One then looks for stopping times $\tau$ that detect failure in finite time, meaning that $A \subseteq \{ \tau < \infty\}$. Among all such stopping times one searches for those that are \emph{efficient}, uniformly across $\Pcal$, in the sense that $\sup_{\P \in \Pcal} \P(\tau < \infty)$ is minimal. This can be thought of as minimizing the probability of false alarms.

There is also an interpretation in terms of type-I error control for sequential tests. Indeed, the definition of $\outerLocal(A)$ says that for any $\alpha > \outerLocal(A)$ there exists a level-$\alpha$ sequential test $\tau$ for the null hypothesis $\Pcal$, with the additional property that the null is guaranteed to be rejected if $A$ occurs. Here a stopping time $\tau$ is identified with the test that rejects the null when $\tau$ occurs, and the level-$\alpha$ property means that the null, if true, will be rejected with probability at most $\alpha$; that is, $\sup_{\P \in \Pcal} \P(\tau < \infty) \le \alpha$. A further interpretation in terms of e-processes arises from Theorem~\ref{T_e_process} below.

There is another natural outer measure, which we call the \emph{maximum likelihood (outer) measure}, namely 
\begin{equation} \label{eq_nu_star}
\outerNatural(A) = \sup_{\P \in \Pcal} \P(A).
\end{equation}
Note that $\outerNatural(A) \leq \outerLocal(A)$ for every $A \in \Fcal$. The inequality can be strict, a fact which is demonstrated and discussed further in Subsection~\ref{S_nu_star} below.

\subsection{The inverse capital measure is an outer measure}

The following lemma gathers some properties of $\outerLocal$.

\begin{lemma} \label{L_properties_of_mu_star}
The set function $\outerLocal$ satisfies the following properties.
\begin{enumerate}
\item\label{L_properties_of_mu_star_1} Monotonicity: if $A \subseteq B$ then $\outerLocal(A) \le \outerLocal(B)$.
\item\label{L_properties_of_mu_star_2} Countable subadditivity: for any sequence of sets $A_n \subseteq \Omega$, $n \in \N$, one has
\[
\outerLocal\Big( \bigcup_{n \in \N} A_n \Big) \le \sum_{n \in \N} \outerLocal(A_n).
\]
\item\label{L_properties_of_mu_star_3} Singletons: if $\Pcal = \{ \P \}$ consists of a single element, then $\outerLocal(A) = \P(A)$ for all $A \in \Fcal$.
\end{enumerate}
In particular, since clearly $\outerLocal(\emptyset) = 0$, the set function $\outerLocal$ is an outer measure on $\Omega$.
\end{lemma}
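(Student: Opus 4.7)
The plan is to dispatch the routine items (i), (ii), and the $\mu^*(\emptyset) = 0$ observation first, and then concentrate on (iii), which is the only step requiring a substantive argument.

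Monotonicity (i) is immediate from the definition, since $A \subseteq B$ forces every $\tau$ with $B \subseteq \{\tau < \infty\}$ also to satisfy $A \subseteq \{\tau < \infty\}$, so the infimum for $\mu^*(A)$ runs over a (weakly) larger family than for $\mu^*(B)$. That $\mu^*(\emptyset) = 0$ follows by taking $\tau \equiv \infty$. For countable subadditivity (ii), I would use the usual $\varepsilon 2^{-n}$-selection trick: fix $\varepsilon > 0$, and for each $n$ pick a stopping time $\tau_n$ with $A_n \subseteq \{\tau_n < \infty\}$ and $\sup_{\P \in \Pcal} \P(\tau_n < \infty) \le \mu^*(A_n) + \varepsilon 2^{-n}$, then combine them into $\tau := \inf_{n} \tau_n$. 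The infimum of countably many stopping times is itself a stopping time, because $\{\tau \le s\} = \bigcup_n \{\tau_n \le s\} \in \Fcal_s$ for each $s \in \N$; further $\{\tau < \infty\} = \bigcup_n \{\tau_n < \infty\} \supseteq \bigcup_n A_n$, and countable subadditivity of $\P$, applied uniformly in $\P \in \Pcal$, yields $\sup_{\P \in \Pcal} \P(\tau < \infty) \le \sum_n \mu^*(A_n) + \varepsilon$. Letting $\varepsilon \downarrow 0$ closes (ii).

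For (iii), the inequality $\mu^*(A) \ge \P(A)$ is a one-line consequence of $A \subseteq \{\tau < \infty\}$ and monotonicity of $\P$. The main obstacle lies in the reverse inequality: because the definition of $\mu^*$ demands literal set inclusion $A \subseteq \{\tau < \infty\}$ rather than $\P$-a.s.\ inclusion, one cannot merely threshold the Doob martingale $\P(A \mid \Fcal_t)$, since this could leave a $\P$-null slice of $A$ uncovered. My remedy is to exploit that $\Ccal := \bigcup_t \Fcal_t$ is an algebra (the $\Fcal_t$ form an increasing chain) and generates $\Fcal$. By Carath\'eodory / uniqueness of extension applied to the restriction of $\P$ to $\Ccal$, the outer-regularity formula
\[
\P(A) \;=\; \inf\Big\{ \textstyle\sum_n \P(B_n) \ :\ B_n \in \Ccal,\ A \subseteq \bigcup_n B_n \Big\}
\]
holds for every $A \in \Fcal$. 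Given $\varepsilon > 0$, choose witnesses $B_n \in \Fcal_{t_n}$ achieving $\sum_n \P(B_n) \le \P(A) + \varepsilon$, and define
\[
\tau(\omega) \;:=\; \inf\{ t_n \ :\ \omega \in B_n \},
\]
with the convention $\inf \emptyset = \infty$. For every $s \in \N$, $\{\tau \le s\} = \bigcup_{n:\, t_n \le s} B_n \in \Fcal_s$, so $\tau \in \Tcal$; moreover $\{\tau < \infty\} = \bigcup_n B_n \supseteq A$ and $\P(\tau < \infty) \le \sum_n \P(B_n) \le \P(A) + \varepsilon$. This gives $\mu^*(A) \le \P(A)$ and finishes (iii); the final ``in particular'' clause is just the assembly of (i), (ii), and $\mu^*(\emptyset) = 0$.
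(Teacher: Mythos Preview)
Your proof is correct and follows essentially the same route as the paper: parts (i) and (ii) match the paper's argument almost verbatim (including the $\varepsilon 2^{-n}$ selection and $\tau = \inf_n \tau_n$), and for (iii) both you and the paper invoke the Carath\'eodory outer-measure approximation from the algebra $\bigcup_t \Fcal_t$ and turn the resulting cover into a stopping time. The only cosmetic difference is that the paper first disjointifies the cover and indexes it directly by time, writing $\tau = \sum_t t\,\bm1_{A_t} + \infty\,\bm1_{B^c}$, whereas you keep the (possibly overlapping) cover $B_n \in \Fcal_{t_n}$ and set $\tau = \inf\{t_n : \omega \in B_n\}$; both constructions yield the same conclusion.
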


\begin{proof} Property~ \ref{L_properties_of_mu_star_1} is immediate from the definition of $\outerLocal$. Let us next prove the remaining two assertions.

Property~\ref{L_properties_of_mu_star_2}: Consider sets $A_n \subseteq \Omega$, $n \in \N$, and fix any $\varepsilon > 0$. For each $n$, let $\tau_n$ be a stopping time such that $A_n \subseteq \{\tau_n < \infty\}$ and $\sup_{\P \in \Pcal} \P(\tau_n < \infty) \le \outerLocal(A_n) + \varepsilon 2^{-n}$. Then $\tau = \inf_{n \in \N} \tau_n$ is a stopping time which satisfies $\bigcup_{n \in \N} A_n \subseteq \{\tau < \infty\}$ and, for each $\P \in \Pcal$,
\[
\P( \tau < \infty ) = \P\Big( \bigcup_{n \in \N} \{\tau_n < \infty\} \Big) \le \sum_{n \in \N} \P(\tau_n < \infty) \le \varepsilon + \sum_{n \in \N} \outerLocal(A_n).
\]
Taking supremum over $\P \in \Pcal$ and using that $\varepsilon > 0$ was arbitrary, the result follows.

Property~\ref{L_properties_of_mu_star_3}: Let $A \in \Fcal$, as assumed. The definition of $\outerLocal$ gives $\P(A) \le \outerLocal(A)$. For the reverse inequality, fix any $\varepsilon > 0$. Since $\Fcal$ is generated by $\bigcup_{t \in \N}\Fcal_t$, the standard Carath\'eodory construction~\cite[Section~3]{billingsley2008probability} that extends $\mathbb{P}$ to $\mathcal{F}$ shows that we can cover $A$ by a union $B = \bigcup_{t \in \N} A_t$ of disjoint events $A_t \in \Fcal_t$ such that $B \supseteq A$ and $\P(B) \le \P(A) + \varepsilon$. 
This ensures that the random variable $\tau = \sum_{t \in \N} t \bm1_{A_t} + \infty \bm1_{B^c}$ (meaning that $\tau$ equals infinity on $B^c$) is a stopping time with $\{\tau < \infty\} = B$. Thus by definition, $\outerLocal(A) \le \P(\tau < \infty) \le \P(A) + \varepsilon$. Since $\epsilon>0$ was arbitrary, we conclude that $\outerLocal(A) \le \P(A)$.
\end{proof}

\subsection{The inverse capital measure versus the maximum likelihood measure} \label{S_nu_star}

We begin by noting that the maximum likelihood measure $\outerNatural$ defined in \eqref{eq_nu_star} also satisfies properties \ref{L_properties_of_mu_star_1}--\ref{L_properties_of_mu_star_3} of Lemma~\ref{L_properties_of_mu_star}. As mentioned before,
it follows from the definitions that $\outerNatural(A) \le \outerLocal(A)$ for every $A \in \Fcal$. 
In the singleton case $\Pcal = \{\P\}$ the inequality is actually an equality since $\outerNatural(A) = \outerLocal(A) = \P(A)$ by Lemma~\ref{L_properties_of_mu_star}(\ref{L_properties_of_mu_star_3}). This implies, in particular, that the maximum likelihood measure can be expressed as
\begin{equation*}
    \outerNatural(A) =  \sup_{\P \in \Pcal} ~ \inf_{\tau \in \Tcal: ~ A \subseteq \{\tau < \infty\}}   \P(\tau < \infty),
\end{equation*}
which provides a direct comparison to~\eqref{eq:inv-cap-outer-m}. Moreover, for general $\Pcal$, we have $\outerLocal(A) = \outerNatural(A)$ for any ``finite-time'' event $A$ of the form $A = \{\tau < \infty\}$. 
However, the inequality can be strict in general. In fact, we show in Example~\ref{ex_Cauchy} that it is possible to have $\outerNatural(A) = 0$ but $\outerLocal(A) = 1$. In particular, the collection of $\outerLocal$-nullsets can be strictly smaller than the collection of $\Pcal$-polar sets, which we recall are sets $A$ such that $\P(A)=0$ for all $\P\in\Pcal$. A key message of Theorem~\ref{T_e_process}, and indeed of this paper, is that in the context of sequential testing and composite versions of Ville's results, $\outerLocal$ turns out to be the more relevant object, and not $\outerNatural$.

\begin{example} \label{ex_Simple}
Consider the sample space $\Omega = \{0,1\}^\N$, the space of all binary sequences. Let $X = (X_t)_{t \in \N}$ denote the canonical process and set $\Fcal_t = \sigma(X_s, s \le t)$, $t \in \N$. Let $\P_n$ assign probability $1/2$ to the sequence $0^\N$ (all zeros) and to the sequence $0^n \cdot 1 \cdot 0^\N$ (a single one occurs after $n$ zeros). We consider $\Pcal = \{ \P_n \}_{n \in \N_0}$. Let $A = \{0^\N\}$. Clearly $\P_n(A) = 1/2$ for all $n \in \N_0$, and hence $\outerNatural(A) = 1/2$. 

  However, we have $\outerLocal(A) = 1$. To see why, observe that any stopping time $\tau$ that stops on $A$ must stop after some finite number of zeros, say $m$. But then for $n \ge m$, we have $\P_n(\tau < \infty) = 1$. Since this holds for any $\tau$ such that $A \subseteq \{\tau < \infty\}$, we conclude $\outerLocal(A)=1$. 
  
  Curiously, on the complementary event $A^c$ (there is a one somewhere), $\outerLocal(A^c) = 1/2$. This claim is witnessed by the stopping time that stops when it encounters a one.
\end{example}

\begin{remark}
In the singleton case of $\Pcal=\{\mathbb P\}$, which is what Ville considered, the outer measure $\outerLocal$ of any measurable set $A$ equals its probability $\P(A)$, which does not depend on the filtration. In the general composite case, however, the underlying filtration matters in the definition of $\outerLocal$ because the filtration determines the set of stopping times. In particular, the set of $\outerLocal$ nullsets depends on the filtration. This can be seen by considering the extreme case where the filtration $(\Fcal_t)_{t \in \N_0}$ is replaced by the constant filtration $(\Gcal_t)_{t \in \N_0}$ where $\Gcal_t = \Fcal$ for all $t$. In this case $\outerLocal$ becomes equal to the maximum likelihood measure $\outerNatural$ in \eqref{eq_nu_star}. In general, a larger filtration leads to smaller values of $\outerLocal$ (but, of course, it cannot decrease below $\outerNatural$) and to a larger family of increasing e-processes. In this way, more information can lead to more powerful tests.

One may ask for conditions under which $\outerLocal$ and $\outerNatural$ are equal on $\Fcal$. An abstract condition of this kind can be extracted directly from the definition of $\outerLocal$. Indeed, the definition yields
\[
\outerNatural(A) \le \outerLocal(A) \le \outerNatural(A) + \inf_{\tau \in \Tcal\colon A \subseteq \{\tau < \infty\}} \outerNatural(\{\tau < \infty\} \setminus A).
\]
Thus $\outerLocal$ and $\outerNatural$ coincide on $\Fcal$ provided that any $A \in \Fcal$ admits a \emph{uniform outer approximation} by sets of the form $\{\tau < \infty\}$, $\tau \in \Tcal$, in the sense that there is a nondecreasing sequence of stopping times $\tau_n$ such that $A \subset \{\tau_n < \infty\}$ for all $n$ and $\sup_{\P \in \Pcal} \P(\{\tau_n < \infty\} \setminus A) \downarrow 0$ as $n \to \infty$.
\end{remark}

\begin{remark}
For certain choices of $\Pcal$, the set function $\outerNatural$ is \emph{submodular}, meaning that
\[
\outerNatural(A \cup B) + \outerNatural(A \cap B) \le \outerNatural(A) + \outerNatural(B)
\]
for all $A,B \in \Fcal$. In this case, submodularity is inherited by $\outerLocal$. Indeed, suppose $A \subset \{\tau < \infty\}$ and $B \subset \{\sigma < \infty\}$ for some $\tau,\sigma \in \Tcal$. Then, because $\{\tau < \infty\} \cup \{\sigma < \infty\} =  \{\tau \wedge \sigma < \infty\}$ and $\{\tau < \infty\} \cap \{\sigma < \infty\} = \{\tau \vee \sigma < \infty\}$, the definition of $\outerLocal$ and submodularity of $\outerNatural$ yield
\begin{align*}
\outerLocal(A \cup B) + \outerLocal(A \cap B) &\le \outerNatural(\tau \wedge \sigma < \infty) + \outerNatural(\tau \vee \sigma < \infty) \\
&\le \outerNatural(\tau < \infty) + \outerNatural(\sigma < \infty).
\end{align*}
Taking the infimum of the right-hand side over all such stopping time $\tau, \sigma$ yields submodularity of $\outerLocal$, namely
\[
\outerLocal(A \cup B) + \outerLocal(A \cap B) \le \outerLocal(A) + \outerLocal(B).
\]
However, $\outerNatural$ is not always submodular. For example, consider the four-point space $\Omega = \{a,b,c,d\}$ along with $\Pcal = \{\P_1,\P_2\}$ where
\[
\P_1 = \left(\frac13,\frac13,\frac13,0\right), \quad \P_2 = \left(0,\frac23,0,\frac13\right).
\]
In this case we have for $A=\{a,b\}$ and $B=\{b,c\}$ that $\outerNatural(A \cup B) + \outerNatural(A \cap B)=\frac53$ but $\outerNatural(A) + \outerNatural(B)=\frac43$. This violates submodularity. As a consequence, since $\outerLocal$ may coincide with $\outerNatural$ (for instance if the filtration is constant), $\outerLocal$ may fail to be submodular as well. Submodularity of $\outerLocal$, when it holds, implies that the Choquet integral $\int f d\outerLocal$ with respect to $\outerLocal$ of any bounded $\Fcal$-measurable real-valued function $f$ on $\Omega$ admits the representation
\[
\int f d\outerLocal = \max_{\P \in \Pcal^*} \E_\P[f],
\]
where $\Pcal^*$ is the set of all finitely additive probability measures $\P$ on $\Fcal$ such that $\P(A) \le \outerLocal(A)$ for all $A \in \Fcal$. The Choquet integral is then both subadditive and comonotonically additive. See \cite{MR3859905} for more details, in particular Theorem~4.88 and Corollary~4.89. It is an interesting open question what can be said about the relation between $\Pcal^*$, $\Pcal$, and the filtration $(\Fcal_t)_{t \in \N_0}$. We would like to thank an anonymous referee for bringing the question of submodularity to our attention.
\end{remark}

\subsection{The law of large numbers for truncated Cauchy distributions}\label{sec:cauchy}

We now present a more sophisticated example that is of relevance later in the paper. A key added feature compared to Example~\ref{ex_Simple} is that we now consider i.i.d.\ sequences.

\begin{example} \label{ex_Cauchy}
Consider the sample space $\Omega = \R^\N$, the space of all infinite real-valued sequences. Let $X = (X_t)_{t \in \N}$ denote the canonical process and set $\Fcal_t = \sigma(X_s, s \le t)$, $t \in \N$. We will consider a collection $\Pcal$ of laws under which $X$ is i.i.d.\ with Cauchy distributions that are truncated at higher and higher levels. This will lead to a situation where $\outerNatural(A) = 0$, $\outerLocal(A) = 1$, and $\outerLocal(A^c) = 1$ hold simultaneously for the same event $A = A_\textnormal{div}$ given by
\begin{equation}\label{eq:a-div}
A_\textnormal{div} = \left\{ \lim_{t \to \infty} \frac{1}{t} \sum_{s=1}^t X_s \textnormal{ does not exist} \right\}.
\end{equation}
Specifically, let $\Q$ denote the law under which $X$ is i.i.d.\ Cauchy (centered at the origin). For each $a \in (0,\infty)$ we define the truncation of $X$ at magnitude $a$ by
\[
X^a_t = X_t \bm1_{\{|X_t| < a\}} + a \bm1_{\{X_t \ge a\}} - a \bm1_{\{X_t \le -a\}}, \qquad t \in \N.
\]
Then, under $\Q$, the process $X^a = (X^a_t)_{t \in \N}$ is i.i.d.\ with a truncated Cauchy distribution. More precisely, the marginal distribution coincides with the Cauchy on $(-a,a)$ and collects the remaining mass symmetrically at $\pm a$. We let $\P_a$ denote the law of $(X^a_t)_{t \in \N}$ under $\Q$, and consider the family
\[
\Pcal = \{\P_a \colon a \in (0, \infty) \}.
\]
\end{example}

\begin{proposition}
\label{prop:cauchy}
For the class $\Pcal$ of truncated Cauchy distributions defined above, we have
% However, we claim that
\begin{equation} \label{eq_ex_Cauchy}
\outerNatural(A) =0, \text{ but }
\outerLocal(A_\textnormal{div}) = 1 = \outerLocal(A_\textnormal{div}^c).
\end{equation}
\end{proposition}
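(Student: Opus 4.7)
The statement splits into three claims, to be proved in order of increasing difficulty. For the two easy ones, I would note that for every $a \in (0,\infty)$, under $\P_a$ the canonical process is i.i.d.\ with a bounded, symmetric marginal supported in $[-a,a]$, so it has finite mean zero and Kolmogorov's strong law immediately gives $\P_a(A_\textnormal{div}) = 0$ and $\P_a(A_\textnormal{div}^c) = 1$. Taking the supremum over $a$ yields $\nu^*(A_\textnormal{div}) = 0$. For $\mu^*(A_\textnormal{div}^c)$, every stopping time $\tau$ with $A_\textnormal{div}^c \subseteq \{\tau<\infty\}$ satisfies $\P_a(\tau<\infty) \ge \P_a(A_\textnormal{div}^c) = 1$ for every $a$, so every competitor in the infimum defining $\mu^*(A_\textnormal{div}^c)$ already equals $1$; hence $\mu^*(A_\textnormal{div}^c) = 1$.

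The heart of the proof will be $\mu^*(A_\textnormal{div}) = 1$. Since no $\P_a$ charges $A_\textnormal{div}$, I cannot bound $\P_a(\tau<\infty)$ from below by its mass on $A_\textnormal{div}$; instead I would route the argument through the (non-member) untruncated Cauchy law $\Q$. The key probabilistic input is that the strong law fails catastrophically under $\Q$: by stability of the Cauchy distribution, $\bar X_t := t^{-1}\sum_{s=1}^t X_s$ is itself Cauchy for every $t \ge 1$. If $\bar X_t \to L$ held $\Q$-a.s., then $L$ would be measurable with respect to $\bigcap_n \sigma(X_n, X_{n+1}, \ldots)$ (because $\bar X_t$ has the same $\Q$-a.s.\ limit as the tail average $(t-n)^{-1}\sum_{s=n+1}^t X_s$ for every fixed $n$), hence $\Q$-a.s.\ constant by Kolmogorov's zero-one law, which would contradict $\bar X_t \sim \mathrm{Cauchy}(0,1)$. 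I therefore conclude $\Q(A_\textnormal{div}) = 1$.

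Given any $\tau \in \Tcal$ with $A_\textnormal{div} \subseteq \{\tau<\infty\}$ and any $\varepsilon > 0$, I would pick $t_0$ with $\Q(\tau \le t_0) \ge 1 - \varepsilon/2$ (possible since $\Q(\tau<\infty) \ge \Q(A_\textnormal{div}) = 1$ and $\{\tau \le t\}$ increases to $\{\tau<\infty\}$). The coupling is built into the definition of $\P_a$ itself: since $\P_a$ is the pushforward of $\Q$ under componentwise truncation at level $a$, and since $\{\tau \le t_0\} \in \Fcal_{t_0}$ takes the form $\{(X_1,\ldots,X_{t_0}) \in E\}$ for some Borel $E \subseteq \R^{t_0}$, one has $\P_a(\tau \le t_0) = \Q((X^a_1,\ldots,X^a_{t_0}) \in E)$; and on $\{\max_{s \le t_0}|X_s| < a\}$ the truncated and untruncated tuples coincide, so
\[
|\P_a(\tau \le t_0) - \Q(\tau \le t_0)| \le \Q\Bigl(\max_{1 \le s \le t_0}|X_s| \ge a\Bigr) \xrightarrow{a \to \infty} 0.
\]
For $a$ large enough this gives $\P_a(\tau<\infty) \ge \P_a(\tau \le t_0) \ge 1 - \varepsilon$, hence $\sup_{\P \in \Pcal}\P(\tau<\infty) \ge 1 - \varepsilon$; letting $\varepsilon \downarrow 0$ and then taking the infimum over $\tau$ yields $\mu^*(A_\textnormal{div}) \ge 1$. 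The main obstacle is precisely this last passage: the needed lower bound on $\P_a(\tau<\infty)$ cannot come from within $\Pcal$ at all, but must be imported from the limit object $\Q$, which lies outside $\Pcal$ yet is approximated by $\Pcal$ on every bounded time horizon. It is exactly this phenomenon---visible at finite horizons through $\Q$, yet invisible to each individual $\P_a$ at infinity---that forces $\mu^*$ to be strictly coarser than $\nu^*$ in this example.
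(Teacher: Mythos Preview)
Your proof is correct and follows essentially the same approach as the paper's: both arguments route the lower bound on $\sup_{\P\in\Pcal}\P(\tau<\infty)$ through the untruncated Cauchy law $\Q$ (using $\Q(A_\textnormal{div})=1$), exploit the built-in coupling that $\P_a$ and $\Q$ agree on events where no coordinate up to a finite horizon exceeds $a$ in absolute value, and then pass to a deterministic horizon to make the approximation effective. The only cosmetic difference is order of operations---you fix the horizon $t_0$ first and then compare $\P_a(\tau\le t_0)$ with $\Q(\tau\le t_0)$, whereas the paper carries the random horizon $\tau$ a step further before introducing its deterministic $T$; your version is arguably slightly cleaner, and your explicit justification of $\Q(A_\textnormal{div})=1$ via Cauchy stability and the zero--one law fills in a detail the paper leaves implicit.
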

\begin{proof}
For each $a \in (0, \infty)$ we have $\E_{\P_a}[ |X_1| ] = \E_{\Q}[ |X^a_1| ] \le a$ and $\E_{\P_a}[ X_1 ] = \E_{\Q}[ X^a_1 ] = 0$. Thus the strong law of large numbers yields
\[
\P_a(A_\textnormal{div}) = 0,
\]
for any $a>0$. Thus, $\sup_{\P \in \Pcal} \P(A) = \outerNatural(A)= 0$, as claimed. 

In order to reason about $\outerLocal$, consider any stopping time $\tau$ such that $A_\text{div} \subseteq \{\tau < \infty\}$. We write $\tau = \tau(X)$ to emphasize that $\tau$ is a function of the trajectory $X = (X_t)_{t \in \N}$. We have
\begin{equation} \label{eq_ex_Cauchy_2}
\begin{aligned}
\P_a( \tau(X) < \infty ) &\ge \P_a( \tau(X) < \infty \text{ and } |X_t| < a \text{ for all } t \le \tau(X) ) \\
&= \Q( \tau(X^a) < \infty \text{ and } |X^a_t| < a \text{ for all } t \le \tau(X^a) ) \\
&= \Q( \tau(X) < \infty \text{ and } |X_t| < a \text{ for all } t \le \tau(X) ),
\end{aligned}
\end{equation}
where in the last equality we used that $X^a_t = X_t$ whenever either value belongs to $(-a,a)$, and that $\tau(X)$ only depends on the values of $X_t$ for $t \le \tau(X)$. Next, since $X$ is i.i.d.\ Cauchy under $\Q$, and hence the sample averages almost surely fail to converge, we have $\Q(\tau(X) < \infty) \ge \Q(A_\textnormal{div}) = 1$. Thus 
\begin{equation} \label{eq_ex_Cauchy_3}
\begin{aligned}
\Q( & \tau(X) < \infty \text{ and } |X_t| < a \text{ for all } t \le \tau(X) ) \\
&= \Q( |X_t| < a \text{ for all } t \le \tau(X) ) \\
&\ge \Q( |X_t| < a \text{ for all } t \le T ) - \Q( \tau(X) > T)
\end{aligned}
\end{equation}
for any $T \in \N$. Fix any $\varepsilon > 0$ and choose $T$ large enough that $\Q( \tau(X) > T) \le \varepsilon$, which is possible because $\Q(\tau(X) < \infty) = 1$. Combining \eqref{eq_ex_Cauchy_2} and \eqref{eq_ex_Cauchy_3} we then deduce that
\[
\P_a( \tau(X) < \infty ) \ge \Q( |X_t| < a \text{ for all } t \le T ) - \varepsilon.
\]
Sending $a$ to $\infty$ we find that $\sup_{a \in (0,\infty)} \P_a( \tau(X) < \infty ) \ge 1 - \varepsilon$. Since $\tau$ was an arbitrary stopping time with $A_\textnormal{div} \subseteq \{\tau < \infty\}$ it follows that $\outerLocal(A_\textnormal{div}) \ge 1 - \varepsilon$. Since this holds for every $\varepsilon > 0$, we obtain the first equality in \eqref{eq_ex_Cauchy}.

Despite the fact that \eqref{eq_ex_Cauchy} holds, we also have $\outerLocal(A_\textnormal{conv}) = 1$ where 
\[
A_\textnormal{conv} = A_\textnormal{div}^c = \left\{ \lim_{t \to \infty} \frac{1}{t} \sum_{s=1}^t X_s \textnormal{ exists} \right\}.
\]
Indeed, $\outerLocal(A_\textnormal{conv}) \ge \outerNatural(A_\textnormal{conv}) = \sup_{\P \in \Pcal} \P(A_\textnormal{conv}) = 1$ by the strong law of large numbers.
\end{proof}

Note that \eqref{eq_ex_Cauchy} is not a contradiction, since $\outerLocal$ is an outer measure, and not a probability measure.

Much intuition from the above example will be necessary when we prove a composite version of the strong law of large numbers in Section~\ref{sec:slln}, where we impose a uniform integrability condition on the class, in order for $A_\textnormal{div}$ to be a $\outerLocal$-nullset (as one may expect), as opposed to a full outer-measure set in the above example.

\section{A composite generalization of Ville's theorem}
\subsection{Statement and proof of the composite version}

We now have the necessary technical instruments in place to present our first main result, which is a composite version of Ville's theorem in terms of e-processes (recall their defining property in \eqref{def:e-process}).

\begin{theorem}\label{T_composite_Ville}
Let $(\Omega, \mathcal{F}, (\mathcal{F}_t)_{t \in \N_0})$ be a filtered measurable space with $\mathcal{F} = \sigma(\bigcup_{t \in \N_0} \mathcal{F}_t)$, let $\Pcal$ be a family of probability measures on $\Fcal$, let $\outerLocal$ be the inverse capital outer measure from~\eqref{eq:inv-cap-outer-m}, and let $A \subseteq \Omega$ be arbitrary. Then, 
\[
\text{$\outerLocal(A) = 0$ if and only if there exists a $\Pcal$-e-process $(E_t)_{t \in \N_0}$ such that $\lim_{t \to \infty} E_t = \infty$ on $A$.}
\]
\end{theorem}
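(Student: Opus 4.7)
The plan is to prove the two implications separately. The backward direction $(\Leftarrow)$ is a direct extension of Ville's inequality to e-processes: given a $\Pcal$-e-process $E$ with $E_t \to \infty$ on $A$, I would define for each $c \in (0,\infty)$ the stopping time $\tau_c = \inf\{t \in \N_0 : E_t \ge c\}$. Divergence of $E$ on $A$ gives $A \subseteq \{\tau_c < \infty\}$, and on this event $E_{\tau_c} \ge c$ by construction; combined with nonnegativity of $E$ and the e-process property applied to $\tau_c$, this yields $c\,\P(\tau_c < \infty) \le \E_\P[E_{\tau_c}] \le 1$ for every $\P \in \Pcal$, hence $\mu^*(A) \le 1/c$. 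Letting $c \to \infty$ gives $\mu^*(A) = 0$.

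The forward direction $(\Rightarrow)$ is constructive. Assume $\mu^*(A) = 0$. By the definition of $\mu^*$, for each $n \in \N$ I can select a stopping time $\tau_n$ with $A \subseteq \{\tau_n < \infty\}$ and $\alpha_n := \sup_{\P \in \Pcal} \P(\tau_n < \infty) \le 4^{-n}$. Define the adapted, nondecreasing, nonnegative process $E^{(n)}_t = \alpha_n^{-1}\bm1_{\{\tau_n \le t\}}$, which is bounded by $\alpha_n^{-1}$. For any stopping time $\sigma$ and any $\P \in \Pcal$, $\E_\P[E^{(n)}_\sigma] \le \alpha_n^{-1}\P(\tau_n < \infty) \le 1$, using the convention $E^{(n)}_\infty = \alpha_n^{-1}\bm1_{\{\tau_n<\infty\}}$, so each $E^{(n)}$ is a $\Pcal$-e-process. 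Next, set
\[
E_t = \sum_{n \in \N} 2^{-n} E^{(n)}_t,
\]
an adapted, nonnegative process. By Tonelli, $\E_\P[E_\sigma] = \sum_n 2^{-n}\E_\P[E^{(n)}_\sigma] \le 1$ for every stopping time $\sigma$ and every $\P \in \Pcal$, so $E$ is a $\Pcal$-e-process. On $A$, every $\tau_n$ is finite, so for any $N \in \N$ and all $t \ge \max_{n \le N}\tau_n$ one has $E_t \ge \sum_{n=1}^N 2^{-n}/\alpha_n \ge \sum_{n=1}^N 2^n$, which tends to $\infty$ as $N \to \infty$. Hence $E_t \to \infty$ on $A$.

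The only delicate point is coordinating the efficiency rate $\alpha_n \to 0$ supplied by $\mu^*$ with the convex-combination weights $c_n$ in such a way that $\sum_n c_n/\alpha_n = \infty$, which is precisely what forces $E$ to blow up on $A$; the choice $c_n = 2^{-n}$, $\alpha_n \le 4^{-n}$ is one simple way, but many others work. Everything else—adaptedness, the Tonelli interchange of sum and expectation, and the Markov-type estimate in the backward direction—is routine. The conceptual insight is that the definition of $\mu^*$ itself already supplies the natural building blocks of the desired e-process, namely indicator e-processes built from $\mu^*$-efficient stopping times, which is why the notion of e-process (as opposed to martingale or supermartingale) is the exactly right composite analogue here.
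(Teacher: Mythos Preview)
Your proof is correct and follows essentially the same approach as the paper's. The only cosmetic difference is in the forward direction: the paper builds the witness directly as $E_t = \sum_{n}\bm1_{\{\tau_n \le t\}}$ with $\sup_{\P}\P(\tau_n<\infty)\le 2^{-n}$, whereas you first normalize each indicator into an e-process $E^{(n)}$ and then take a convex mixture---but the underlying idea (stack indicator processes from $\mu^*$-efficient stopping times) is identical.
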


In the context of the above theorem, any $\Pcal$-e-process that explodes to infinity on $A$ will be called a ``witness e-process for $A$''. The proof below explicitly constructs such a witness e-process, and we give even more concrete witness e-processes in special cases later.

\begin{proof}
If $\outerLocal(A) = 0$, then for each $n \in \N$ there exists a stopping time $\tau_n$ such that $A \subseteq \{\tau_n < \infty\}$ and $\sup_{\P \in \Pcal} \P(\tau_n < \infty) \le 2^{-n}$. Define 
\begin{equation} \label{T_composite_Ville_eq1}
E_t = \sum_{n \in \N} \bm1_{\{\tau_n \leq t\}}, \qquad t \in \N_0.
\end{equation}
This process is nondecreasing and satisfies $E_\infty = \lim_{t \to \infty} E_t = \sum_{n \in \N} \bm1_{\{\tau_n < \infty\}}$. In particular, $E_\infty = \infty$ on $A$. Moreover, $(E_t)_{t \in \N_0}$ is an e-process because for any stopping time $\tau$ and $\P \in \Pcal$,
\[
\E_\P[ E_\tau ] \le \E_\P[ E_\infty ] = \sum_{n \in \N} \P(\tau_n < \infty) \le 1.
\]
Conversely, suppose there is an e-process $(E_t)_{t \in \N_0}$ such that $\lim_{t \to \infty} E_t = \infty$ on $A$. Pick any $a \in (0,\infty)$ and let $\tau = \inf\{t \ge 0 \colon E_t \ge a\}$. Then $A \subseteq \{\tau < \infty\}$, and the definition of an e-process yields
\[
\P( \tau < \infty ) \le \frac{1}{a} \E_\P [ \bm1_{\{\tau < \infty\}} E_\tau ] \le \frac{1}{a}
\]
for all $\P \in \Pcal$. Thus $\outerLocal(A) \le 1/a$ and hence, since $a \in (0,\infty)$ was arbitrary, $\outerLocal(A) = 0$.
\end{proof}

Several remarks are in order.

\begin{remark}[The singleton case]\label{rem:singleton}
In the singleton case $\Pcal = \{\P\}$, this result does not quite recover the original theorem of Ville, because our e-process does not reduce to being a $\P$-martingale. This is easily remedied by modifying \eqref{T_composite_Ville_eq1} to $E_t = \sum_{n \in \N} \P(\tau_n < \infty \mid \Fcal_t)$, which is a $\P$-martingale. To be precise, we must choose versions of the conditional probabilities $\P(\tau_n < \infty \mid \Fcal_t)$ that are identically equal to one on $\{\tau_n \le t\}$ (not just almost surely equal to one, which is automatic), in order to obtain $\lim_{t \to \infty} E_t = \infty$ everywhere on $A$ (not just almost everywhere). The original proof by Ville, or the  version in Chapter~8.5 of \cite{shafer2001probability}, carefully accounts for this subtlety.
\end{remark}

\begin{remark}[$\outerNatural(A)=0$ does not suffice]
We clarify here that a version of the theorem with $\outerLocal(A)=0$ being replaced by $\outerNatural(A)=0$ is incorrect. Indeed, Section~\ref{sec:cauchy} demonstrated an explicit event (for truncated Cauchy distributions) with $\outerNatural(A)=0$ but $\outerLocal(A)=1$. If $\outerNatural(A)=0$ had sufficed for producing a witness $\Pcal$-e-process (or supermartingale, or martingale) for $A$, then the converse direction of Theorem~\ref{T_composite_Ville} could be invoked to then further conclude that $\outerLocal(A)=0$, which yields a contradiction. 
\end{remark}

\begin{remark}[Nonnegative $\Pcal$-supermartingales do not suffice]\label{rem:supermartingales}
In the general non-singleton case, one cannot expect an e-process to be a martingale under every $\P \in \Pcal$. (We demonstrate an example in Subsection~\ref{sec:necessity} for which no $\Pcal$-supermartingale can increase to infinity, but an e-process can.) However, in the special case that all elements of $\Pcal$ are mutually locally absolutely continuous
and $\Pcal$ satisfies a certain geometric condition known as ``fork-convexity'', then it is possible to construct the e-process to be a nonnegative $\Pcal$-supermartingale. This requires technical machinery involving composite Snell envelopes, that we do not develop further in this paper; see  \cite[Section~3]{ramdas2021testing}.
\end{remark}

\begin{remark}[Countable unions of events]\label{rem:unions}
If one considers a countable number of events $\{A_n\}_{n \in \N}$ such that $\outerLocal(A_n)=0$ for each of them, Lemma~\ref{L_properties_of_mu_star} implies that their union also satisfies $\outerLocal(\bigcup_n A_n)=0$. This fact is witnessed by simply taking any mixture of the underlying e-processes. To clarify, if $E^n$ is the e-process from the above theorem which witnesses the fact that $\outerLocal(A_n)=0$, then for any positive sequence $\{w_n\}_{n \in \N}$ that sums to one, we see that $\sum_{n\in\N} w_n E^n$ is an e-process that witnesses $\outerLocal(\bigcup_n A_n)=0$. Indeed, $\sum_{n\in\N} w_n E^n$ is infinite if and only if at least one of its constituent e-processes is also infinite. (This remark is a direct parallel of Lemma~3.2 in \cite{shafer2001probability}, except the remark is stated more generally in terms of e-processes.)
\end{remark}

\begin{remark}[An unbounded $\sup$ suffices]\label{rem:limsup-suffices}
We claim that if there exists a $\Pcal$-e-process $E$ such that $\sup_{t\ge0} E_t = \infty$ on an event $A$, then there also exists a possibly different $\Pcal$-e-process $E'$ such that $\lim_{t\to\infty} E'_t = \infty$. The proof is simple. Let $\tau_n=\inf\{t: E_t \geq 2^n\}$ be the first time that $E$ crosses level $2^n$. Since $E$ crosses every finite level on $A$, we know that $A \subseteq \{\tau_n < \infty\}$.  Now define $E'_t=\sum_{n=1}^\infty \bm1_{\{\tau_n \leq t\}}$, exactly as in~\eqref{T_composite_Ville_eq1}, and note that on $A$, we have $\lim_{t\to\infty} E'_t=\infty$. Since $E'$ is adapted, nondecreasing, and 
\[
\mathbb{E}_{\mathbb{P}}\left[\lim_{t \uparrow \infty} E'_t\right] = 
\mathbb{E}_{\mathbb{P}}\left[ \sum_{n=1}^\infty \bm1_{\{\tau_n < \infty\}}\right]
=\sum_{n=1}^\infty \mathbb{P}(\tau_n < \infty)
\leq \sum_{n=1}^\infty \frac{1}{2^n} = 1
\]
for all $\mathbb{P} \in \Pcal$, $E'$ is indeed an e-process.
In summary, if one can exhibit an e-process $E$ that is unbounded on $A$, then one can conclude that $\outerLocal(A)=0$. (This remark is a direct parallel of Lemma~3.1 in \cite{shafer2001probability}, except the remark is stated more generally in terms of e-processes.)
\end{remark}

\subsection{SubGaussian supermartingales and e-processes as witnesses}\label{eg:subGaussian}

We now provide two simple examples where a nonnegative $\Pcal$-supermartingale does bear witness to our main theorem, and a third example where a $\Pcal$-e-process emerges. 

We first recall a standard definition: a zero-mean random variable $X$ is called $\sigma$-subGaussian, for some known $\sigma > 0$,  if $\mathbb{E}[e^{\lambda X}] \leq e^{\lambda^2\sigma^2/2}$ for every $\lambda \in \mathbb{R}$. A centered Gaussian with variance at most $\sigma^2$ is $\sigma$-subGaussian, but \cite{hoeffding_probability_1963} proved that centered bounded random variables are also subGaussian (for an appropriate $\sigma$). Thus, the class of subGaussian random variables is very richly nonparametric, containing discrete and continuous distributions, both bounded and unbounded. 

\paragraph{Case 1: zero mean.}
Now, define $\Pcal^0$ to consist of all distributions over real-valued sequences $(X_t)_{t \in \N}$ such that for every $t\in\N$,  $X_t$ is mean-zero and $\sigma$-subGaussian when conditioned on $X_1,\dots,X_{t-1}$. Recalling~\eqref{eq:a-div}, define the event
\begin{equation}\label{eq:a-div-zero}
A^0_\textnormal{div} = \left\{ \lim_{t \to \infty} \frac{1}{t} \sum_{s=1}^t X_s \textnormal{ does not converge to zero}\right\},
\end{equation}
and note that $\mathbb{P}(A^0_\textnormal{div})=0$ for every $\mathbb{P}\in\Pcal^0$. We claim that $\outerLocal(A^0_\textnormal{div})=0$. It is possible to argue this directly, but we will instead do so by exhibiting a nonnegative $\Pcal^0$-supermartingale whose $\limsup$ is unbounded on $A^0_\textnormal{div}$. We treat the case $\sigma=1$ without loss of generality. First note that it is an immediate consequence of conditional subGaussianity that for any $\lambda \in \mathbb{R}$, the process 
\begin{equation}\label{eq:subG-lambda-supermg}
L_t(\lambda) = \exp\left(\lambda \sum_{i=1}^t X_i - \lambda^2 t/2\right)
\end{equation}
is a nonnegative $\Pcal^0$-supermartingale with respect to the natural filtration generated by the data. By mixing this supermartingale over all $\lambda \in \mathbb{R}$ using a standard Gaussian distribution over $\lambda$, \cite{robbins_boundary_1970} derived the ``normal mixture'' $\Pcal^0$-supermartingale 
\begin{equation*}
M_t = \frac{\exp\left(\frac{t^2 \overline X_t^2}{ 2(t+1)}\right)}{\sqrt{t+1}},
\end{equation*}
where $\overline X_t =  \sum_{i=1}^t X_i/t$ is the empirical mean. 

Now, we claim that  $(M_t)_{t \in \N_0}$ is unbounded on $A^0_\textnormal{div}$. Indeed, if $\overline X_t$ does not converge to zero, then, along a subsequence, $\overline X_t^2$ remains above a strictly positive constant $c$. Along this subsequence, we have $M_t \ge \exp(  t^2 c^2 / (2(t+1)))/\sqrt{t+1}$, which converges to infinity. By Remark~\ref{rem:limsup-suffices}, it suffices to treat $(M_t)_{t \in \N_0}$ as the witness  $\Pcal^0$-supermartingale (and thus $\Pcal^0$-e-process) to conclude that $\outerLocal(A^0_\textnormal{div})=0$.

\paragraph{Case 2: nonpositive mean.}
Next, consider the event
\begin{equation*}
A^{>0} = \left\{ \limsup_{t \to \infty} \frac{1}{t} \sum_{s=1}^t X_s > 0 \right\},
\end{equation*}
and the set of distributions 
\begin{equation*}
\Pcal^{\leq0}=\left\{\mathbb{P}:  
a_t \leq 0 \text{ and } X_t - a_t \text{ is conditionally 1-subGaussian}, \text{ for all } t\in\N \right\},
\end{equation*}
where $a_t = \mathbb{E}_\mathbb{P}[X_t \mid X_1,\dots,X_{t-1}]$ exists, is finite, and can itself be predictable. 

In this case, the process $L_t(\lambda)$ from~\eqref{eq:subG-lambda-supermg} is still a $\Pcal^{\leq 0}$-supermartingale, but only for $\lambda \geq 0$. Then, mixing over $\lambda$ using only the positive half of a standard Gaussian, we get the ``one-sided normal mixture'' $\Pcal^{\leq 0}$-supermartingale, for which there is no closed form expression \cite{robbins_boundary_1970,howard_uniform_2019}. Luckily, a suitable closed-form lower bound is given by
\begin{equation*}
N_t = 2 \frac{\exp\left(\frac{2t^2 \overline X_t^2 }{t+1}\right) }{\sqrt{t+1}} \Phi\left(\frac{2t \overline X_t}{\sqrt{t+1}}\right),
\end{equation*}
where $\Phi$ is the standard Gaussian cumulative distribution function. 
Since $(N_t)_{t \in \N_0}$ is not itself a supermartingale, but is upper bounded by one, it is a $\Pcal^{\leq 0}$-e-process.
Now, a similar argument as before proves that $(N_t)_{t \in \N_0}$ is unbounded on $A^{>0}$, that is, $\limsup_{t \to \infty} N_t = \infty$ on any sequence of observations such that $\limsup_{t \to \infty} \overline X_t > 0$.

\paragraph{Case 3: nonpositive running mean.} Finally, consider the class of distributions
\[
\Pcal^\dagger =\left\{\mathbb{P}: 
% \E_\P[|X_t|] < \infty,\  
\sum_{s \leq t} a_s \leq 0 \text{ and } X_t - a_t \text{ is conditionally 1-subGaussian, for all } t\in\N \right\},
\]
where, as before, $a_t = \mathbb{E}_\mathbb{P}[X_t \mid X_1,\dots,X_{t-1}]$ exists, is finite, and can itself be predictable.

We have $\Pcal^\dagger \supsetneq \Pcal^{\leq 0}$ since the definition of $\Pcal^\dagger$ allows for a sequence of conditional means like $-1, -1, 2, 0, -2, 2, \dots$, whose individual elements could be positive as long as the cumulative sums are nonpositive. In this case, the process $L_t(\lambda)$ from~\eqref{eq:subG-lambda-supermg} is no longer a $\Pcal^\dagger$-supermartingale, but it is a $\Pcal^\dagger$-e-process for all $\lambda \geq 0$. (To see this, simply note that for each $\P \in \Pcal^\dagger$, the process $L_t(\lambda)$ is upper bounded by a different nonnegative $\P$-supermartingale, namely $\exp(\lambda \sum_{s=1}^t (X_s-a_s) - \lambda^2 t/2)$.) Thus $N_t$ is also a $\Pcal^\dagger$-e-process, and since we already know that it is unbounded on $A^{>0}$ we can conclude that $\outerLocal(A^{>0})=0$ even for $\Pcal^\dagger$.

We end this subsection by noting that there was nothing particularly special about the above subGaussian example. Howard et al.~\cite{howard_exponential_2018,howard_uniform_2019} describe large classes of composite supermartingales and e-processes under different types of assumptions on the underlying random variable (like subexponential, sub-gamma, sub-Poisson, etc.), for which analogous sets of events and distributions could have been defined, and similar arguments could have been made.

\subsection{E-process characterization for events with positive measure}

One might wonder if there is a version of Theorem~\ref{T_composite_Ville} that is valid for $\outerLocal(A) > 0$. The following result and subsequent discussion and example show what is possible.

\begin{theorem}\label{T_e_process}
Let $(\Omega, \mathcal{F}, (\mathcal{F}_t)_{t \in \N_0})$ be a filtered measurable space with $\mathcal{F} = \sigma(\bigcup_{t \in \N_0} \mathcal{F}_t)$, let $\Pcal$ be a family of probability measures on $\Fcal$, let $\outerLocal$ be the inverse capital outer measure from~\eqref{eq:inv-cap-outer-m}, and let $A \subseteq \Omega$ be arbitrary. Then, the following claims are true:
\begin{enumerate}
\item\label{T_e_process_1} For each $\varepsilon > 0$ there exists an e-process $(E_t)_{t \in \N_0}$ such that $\lim_{t \to \infty} E_t = 1 / (\outerLocal(A) + \varepsilon)$ on $A$.
\item\label{T_e_process_2} For any $c \in [1,\infty]$, if there exists an e-process $(E_t)_{t \in \N_0}$ such that $\sup_{t \in \N_0} E_t \ge c$ on $A$, then $\outerLocal(A) \le 1/c$.
\end{enumerate}
\end{theorem}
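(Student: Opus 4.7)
The plan is to treat the two parts of Theorem~\ref{T_e_process} separately. Both are quantitative refinements of the two directions of Theorem~\ref{T_composite_Ville}, and neither should require any new machinery beyond the definition of the inverse-capital measure and the defining property of an e-process. Part (i) is an essentially constructive scaled-indicator argument, while part (ii) is the e-process analogue of Ville's inequality. The main subtlety, which I expect to be the only real obstacle, is handling the strict-versus-non-strict level crossing in part (ii) and the edge case $c = \infty$.

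For part (i), I would fix $\varepsilon > 0$ and invoke the defining infimum in \eqref{eq:inv-cap-outer-m} to select a stopping time $\tau$ with $A \subseteq \{\tau < \infty\}$ and $\sup_{\P \in \Pcal} \P(\tau < \infty) \le \mu^*(A) + \varepsilon$. I would then set
\[
E^\varepsilon_t = \frac{1}{\mu^*(A) + \varepsilon} \bm1_{\{\tau \le t\}}, \qquad t \in \N_0,
\]
which is $(\Fcal_t)$-adapted and nonnegative. The e-process property follows because for any stopping time $\sigma$ and any $\P \in \Pcal$,
\[
\E_\P[E^\varepsilon_\sigma] = \frac{\P(\tau \le \sigma)}{\mu^*(A) + \varepsilon} \le \frac{\P(\tau < \infty)}{\mu^*(A) + \varepsilon} \le 1.
\]
On $A$ we have $\tau < \infty$, hence $E^\varepsilon_t = 1/(\mu^*(A) + \varepsilon)$ for all $t \ge \tau$, giving the desired limit. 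This is consistent with, but does not by itself recover, Theorem~\ref{T_composite_Ville}: when $\mu^*(A)=0$ one needs to combine the processes $E^{1/n}$ via a convex mixture (as in Remark~\ref{rem:unions}) to obtain an e-process that actually diverges on $A$.

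For part (ii), I would start from a $\Pcal$-e-process $E$ with $\sup_{t \in \N_0} E_t \ge c$ on $A$, and first assume $c \in [1,\infty)$. For each $c' \in (0,c)$ the level-hitting time $\tau_{c'} = \inf\{t \in \N_0 \colon E_t \ge c'\}$ is a stopping time, and $A \subseteq \{\tau_{c'} < \infty\}$ since $\sup_t E_t \ge c > c'$ on $A$. By the very definition of $\tau_{c'}$, the stopped value satisfies $E_{\tau_{c'}} \ge c'$ on $\{\tau_{c'} < \infty\}$; combined with the nonnegativity of $E$ and the e-process property this yields
\[
c' \, \P(\tau_{c'} < \infty) \le \E_\P\bigl[\bm1_{\{\tau_{c'} < \infty\}} E_{\tau_{c'}}\bigr] \le \E_\P[E_{\tau_{c'}}] \le 1
\]
for every $\P \in \Pcal$. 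Taking the supremum over $\Pcal$ and invoking the definition of $\mu^*$ gives $\mu^*(A) \le 1/c'$; letting $c' \uparrow c$ produces $\mu^*(A) \le 1/c$. The case $c = \infty$ then reduces to the finite case: applying the preceding argument with any finite $c' \in [1,\infty)$ in place of $c$ gives $\mu^*(A) \le 1/c'$, and letting $c' \to \infty$ yields $\mu^*(A) = 0 = 1/c$.
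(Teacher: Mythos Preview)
Your proposal is correct and follows essentially the same approach as the paper: for part~\ref{T_e_process_1} the paper constructs the identical one-jump scaled indicator process $E^\varepsilon_t = \bm1_{\{\tau_\varepsilon \le t\}}/(\mu^*(A)+\varepsilon)$, and for part~\ref{T_e_process_2} the paper simply refers back to the converse direction of Theorem~\ref{T_composite_Ville} with $a$ ranging over $(0,c)$, which is exactly your level-crossing argument with $c' \uparrow c$. Your explicit handling of the $c=\infty$ case is a welcome addition that the paper leaves implicit.
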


\begin{proof}
\ref{T_e_process_1}: Let $\tau_\varepsilon$ be a stopping time with $A \subseteq \{\tau_\varepsilon < \infty\}$ and $\sup_{\P \in \Pcal} \P(\tau_\varepsilon < \infty) \le \outerLocal(A) + \varepsilon$, and define the one-jump process
\[
E_t = \frac{1}{\outerLocal(A) + \varepsilon} \bm1_{\{\tau_\varepsilon \leq t\}}, \qquad t \in \N_0.
\]
Then $E_\infty = \bm1_{\{\tau_\varepsilon < \infty\}}/(\outerLocal(A) + \varepsilon)$, which equals $1 / (\outerLocal(A) + \varepsilon)$ on $A$. Moreover, $(E_t)_{t \in \N_0}$ is an e-process because for any stopping time $\tau$ and $\P \in \Pcal$,
\[
\E_\P[ E_\tau ] \le \E_\P[ E_\infty ] = \frac{1}{\outerLocal(A) + \varepsilon} \P(\tau_\varepsilon < \infty) \le 1.
\]

\ref{T_e_process_2}: This is proved exactly as the converse implication of Theorem~\ref{T_composite_Ville}, except that the constant $a$ is chosen from $(0,c)$. One obtains $\outerLocal(A) \le 1/a$ and hence, since $a \in (0,c)$ was arbitrary, $\outerLocal(A) \le 1/c$.
\end{proof}

\begin{remark}
Let $\Pcal$ be the family of i.i.d.\ truncated Cauchy distributions constructed in Example~\ref{ex_Cauchy}. As shown there, $\outerLocal(A_\textnormal{div}) = 1$ which, in view of Theorem~\ref{T_e_process}\ref{T_e_process_2}, makes it impossible to refute convergence with uniform power for this family $\Pcal$. On the other hand, we also saw that $\outerLocal(A_\textnormal{conv}) = 1$ so refuting \emph{divergence} is also impossible for $\Pcal$.
\end{remark}

\subsection{A counterexample to a natural conjecture}

The e-processes constructed in part \ref{T_e_process_1} of Theorem~\ref{T_e_process} depend on $\varepsilon$, and it is natural to ask whether one can find a single e-process $(E_t)_{t \in \N_0}$ that satisfies
\begin{equation}\label{eq_lim_Et_1_muA}
\text{$\sup_{t \in \N_0} E_t \ge \frac{1}{\outerLocal(A)}$ on $A$}
\end{equation}
when $\outerLocal(A) \in (0,1)$. In the singleton case $\Pcal = \{\P\}$ this can be done, at least if $A$ is measurable and $\P$-nullsets are ignored, using the Doob martingale $E_t = \P(A \mid \Fcal_t) / \P(A)$. Indeed, L\'evy's zero-one law states that $\lim_{t \to \infty} E_t = \bm1_A / \P(A)$ almost surely, and this equals $1 / \outerLocal(A)$ on $A$ due to Lemma~\ref{L_properties_of_mu_star}\ref{L_properties_of_mu_star_3}. However, if $\Pcal$ is not a singleton, it is in general impossible to find a single e-process that satisfies \eqref{eq_lim_Et_1_muA}. The following example shows that this phenomenon can occur even if $\Pcal$ contains just two elements.

\begin{example}
Consider the space $\Omega = \{0,1\}^\N$ of binary sequences (coin flips), let $X = (X_t)_{t \in \N}$ denote the canonical process, and set $\Fcal_t = \sigma(X_s, s \le t)$, $t \in \N$. We define two probability measures $\P_1$ and $\P_2$ as follows. The first coin flip is unbiased,
\[
\P_i(X_1 = 1) = \frac12, \qquad i=1,2.
\]
Conditionally on $X_1$, the remaining coin flips $X_2,X_3,\ldots$ are independent under both $\P_1$ and $\P_2$ with success probabilities
\begin{align*}
\P_1( X_t = 1 \mid X_1 = 1) &= \P_2( X_t = 1 \mid X_1 = 0) = \frac12; \\
\P_1( X_t = 1 \mid X_1 = 0) &= \P_2( X_t = 1 \mid X_1 = 1) = 2^{-t}
\end{align*}
for $t \ge 2$. Thus under $\P_1$, if $X_1=1$ the remaining coin flips are i.i.d.\ Bernoulli-$(1/2)$, while if $X_1=0$ they are independent Bernoulli with exponentially decaying (hence summable) success probabilities. Under $\P_2$ the situation is symmetric.

Consider now the event
\[
A = \{ X_t = 1 \text{ for infinitely many $t \in \N$} \}.
\]
The Borel--Cantelli lemma yields $\P_i(A) = 1/2$ for $i=1,2$. Moreover, if $\tau_n$ is the first time that $n$ ones have been observed, then $A \subset \{\tau_n < \infty\}$ and $\P_i(\tau_n < \infty)$ converges to $1/2$ as $n$ tends to infinity, for $i=1,2$. It follows that $\outerLocal(A) = 1/2$.

However, we claim that no e-process for $\Pcal = \{\P_1,\P_2\}$ can satisfy \eqref{eq_lim_Et_1_muA}. To show this, consider any e-process $(E_t)_{t \in \N_0}$ and suppose that $\sup_{t \in \N_0} E_t \ge c$ on $A$ for some constant $c \in [1,\infty)$. We must show that $c < 2$. To do so, fix $T \in \N$ and $\varepsilon \in (0,c)$ such that $\{E_T \ge \varepsilon\} \ne \emptyset$. Define $B = \{E_T \ge \varepsilon\} \cap \{X_1 = 0\}$ and consider the case $B \ne \emptyset$ (the complementary case $\{E_T \ge \varepsilon\} \cap \{X_1 = 1\} \ne \emptyset$ is argued symmetrically). Denote $\delta = \P_1(B) \in (0, 1/2]$. (Indeed $\delta > 0$ since $B \neq \emptyset$ by assumption.) Next, define the stopping time
\[
\tau = \inf\{t \colon E_t \ge c - \delta \varepsilon\} \wedge T_B,
\]
where $T_B = T \bm1_B + \infty \bm1_{B^c}$ (meaning $T_B$ takes the value infinity on the complement of $B$). Note that both $A$ and $B$ are subsets of $\{\tau < \infty\}$. Moreover, the Borel--Cantelli lemma yields $A = \{X_1 = 0\}$ up to a $\P_1$-nullset, and hence $\P_1(A \cap B) = 0$. We conclude that
\[
\E_{\P_1} [ E_\tau \bm1_{\{\tau < \infty\}}] \ge \E_{\P_1} [ E_\tau \bm1_A ] + \E_{\P_1} [ E_\tau \bm1_B ]
\ge \frac12 ( c - \delta \varepsilon ) + \delta \min\{c-\delta \varepsilon, \varepsilon\} > \frac{c}{2}.
\]
On the other hand, the left-hand side is bounded by one since $(E_t)_{t \in \N_0}$ is an e-process. This shows that $c < 2$, as required.
\end{example}

\subsection{Gambling interpretations of the inverse capital measure}

Nonnegative martingales have a straightforward game-theoretic interpretation: they are the wealth of a gambler playing a fair game. Nonnegative supermartingales are similar: they are the wealth of a gambler playing a game whose odds are stacked against them (or playing a fair game, and throwing away some money in each round). $\Pcal$-e-processes admit a slightly more sophisticated game-theoretic interpretation: at the very beginning, the gambler agrees to play (in parallel) a separate fair game against each $P \in \Pcal$ (or one with odds stacked against them), and their wealth at any time is given by their minimum wealth across all these games. For more on this interpretation, see Section~5.4 in Ramdas et al.~\cite{ramdas2021testing}. Thus it is natural to ask: does $\outerLocal$ also admit a game-theoretic interpretation?

In this subsection, we rewrite Theorem~\ref{T_e_process} to bring to the fore its pricing interpretation of $\outerLocal$. The following corollary is a composite analog of Proposition~8.13 in Shafer and Vovk~\cite{shafer2001probability}. It shows that  $1/\outerLocal(A)$ is the maximum level that can be reached by an e-process started at 1, whenever the event $A$ occurs.

\begin{corollary}\label{C_eprocess_price}
Let $\Pcal$ be a family of probability measures on $\Omega$ and let $\outerLocal$ be the associated inverse capital measure~\eqref{eq:inv-cap-outer-m}. Then
\begin{equation} \label{eq:startingcap}
\outerLocal(A) =
    \inf \left\{c \in (0,1] : \liminf_{t \to \infty} c E_t \ge \bm1_A
    \right\}
=
\inf \left\{c \in (0,1] : \sup_{t\in \N_0} c E_t \ge \bm1_A
    \right\},
\end{equation}
where $(E_t)_{t \in \N_0}$ ranges over the set of all e-processes for $\Pcal$.
\end{corollary}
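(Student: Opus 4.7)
The plan is to sandwich $\mu^*(A)$ between the two infima appearing in \eqref{eq:startingcap}. Write
\[
I_1 = \inf\{c \in (0,1] : \liminf_{t\to\infty} cE_t \geq \bm1_A\}, \qquad I_2 = \inf\{c \in (0,1] : \sup_{t\in\N_0} cE_t \geq \bm1_A\},
\]
where each infimum ranges over all $\Pcal$-e-processes $(E_t)_{t \in \N_0}$. The inclusion $I_2 \leq I_1$ is immediate because $\sup_t cE_t \geq \liminf_t cE_t$ for any nonnegative process, so any pair $(c,E)$ admissible for the first set is admissible for the second. It then remains to establish $\mu^*(A) \leq I_2$ and $I_1 \leq \mu^*(A)$, which together close the chain $\mu^*(A) \leq I_2 \leq I_1 \leq \mu^*(A)$ and force equality throughout.

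For $\mu^*(A) \leq I_2$, I would fix any admissible pair $(c,E)$ for $I_2$. On the event $A$ the condition $\sup_t cE_t \geq 1$ rewrites as $\sup_t E_t \geq 1/c \in [1,\infty)$, and Theorem~\ref{T_e_process}\ref{T_e_process_2} applied with threshold $1/c$ yields $\mu^*(A) \leq c$. Taking the infimum over admissible $c$ gives $\mu^*(A) \leq I_2$.

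For $I_1 \leq \mu^*(A)$, I would split on whether $\mu^*(A) < 1$ or $\mu^*(A) = 1$. In the first case, for each sufficiently small $\varepsilon > 0$ the value $c := \mu^*(A) + \varepsilon$ lies in $(0,1]$, and Theorem~\ref{T_e_process}\ref{T_e_process_1} supplies an e-process $E^\varepsilon$ with $\lim_{t\to\infty} E^\varepsilon_t = 1/c$ on $A$; then $\liminf_t cE^\varepsilon_t = 1$ on $A$ and $\liminf_t cE^\varepsilon_t \geq 0$ off $A$, so $c$ is admissible for $I_1$, and sending $\varepsilon \downarrow 0$ delivers $I_1 \leq \mu^*(A)$. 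In the boundary case $\mu^*(A) = 1$, the trivial e-process $E_t \equiv 1$ paired with $c = 1$ satisfies $\liminf_t cE_t = 1 \geq \bm1_A$, so $I_1 \leq 1 = \mu^*(A)$.

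I do not anticipate any substantive obstacle; the only mild subtlety is the boundary case $\mu^*(A) = 1$, where Theorem~\ref{T_e_process}\ref{T_e_process_1} alone would produce an inadmissible $c = 1 + \varepsilon > 1$, and the constant e-process patches this cleanly.
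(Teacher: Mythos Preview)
Your proof is correct and follows essentially the same sandwich argument as the paper: both establish $I_2 \le I_1$ from the pointwise inequality $\liminf_t \le \sup_t$, then use Theorem~\ref{T_e_process}\ref{T_e_process_2} for $\mu^*(A) \le I_2$ and Theorem~\ref{T_e_process}\ref{T_e_process_1} for $I_1 \le \mu^*(A)$. Your explicit handling of the boundary case $\mu^*(A)=1$ via the constant e-process is a small addition over the paper's version, which silently relies on $c=1$ always being admissible.
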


\begin{proof}
  Let $V_{\lim\inf}$ and $V_{\sup}$ denote the value of the first and second infimum respectively in~\eqref{eq:startingcap}. As $\lim\inf_{t \to \infty} c E_t \le \sup_{t \in \N_0} c E_t$, the $\inf$ defining $V_{\sup}$ is over a larger set, and hence $V_{\sup} \le V_{\lim\inf}$. To then prove the claimed equalities \eqref{eq:startingcap}, it suffices to show the sandwich $V_{\lim\inf} \leq \outerLocal(A) \le V_{\sup}$.  In order to prove the first inequality, Theorem~\ref{T_e_process}(\ref{T_e_process_1}) yields an e-process showing that $c=\outerLocal(A)+\epsilon$ is feasible for $V_{\lim\inf}$ for any $\epsilon > 0$, which implies the first inequality by letting $\epsilon \to 0$. Moving on to the second inequality,  Theorem~\ref{T_e_process}(\ref{T_e_process_2}) shows that for $V_{\sup}$ every feasible pair of $c$ and e-process must have $c \ge \outerLocal(A)$, proving the second inequality. This completes the proof.
\end{proof}

The following rewrite brings out yet another aspect related to betting. For a process $(M_t)_{t \in \N_0}$, thought of as payoffs, we can say it has fair price $\sup_{\rho \in \cal T}~ \E_\P[M_\rho]$ under $\P$. The following result shows that the inverse capital measure of an event $A$ is also the infimum of all initial capitals that suffice to super-replicate the payoff $\bm{1}_A$. Due to this interpretation we could also have plausibly called the inverse capital measure the ``initial capital measure''.
%The following result shows that the inverse capital measure of an event $A$ is also the smallest fair price of any payoff super-replicating $\bm{1}_A$. 

    \begin{lemma}
    The following dual representation for $\outerLocal$ holds:
      \begin{equation}
        \outerLocal(A)
        ~\stackrel{=}~
        \inf_{\tau \in \cal T : A \subseteq \{\tau < \infty\}}~
        \sup_{\P \in \Pcal}~
        \P(\tau < \infty)
        ~=~
        \inf_{\substack{
            \text{$(M_t)_t \ge 0$ s.t.} \\
            \sup_{t \in \N_0} M_t \ge \bm{1}_A
          }
          }~
        \sup_{\P \in \Pcal}~
        \sup_{\rho \in \cal T}~
        \E_\P[M_\rho].
    \end{equation}
  \end{lemma}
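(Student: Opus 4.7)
The first equality is just the definition of $\mu^*$ from~\eqref{eq:inv-cap-outer-m}, so the task reduces to establishing the second equality. Write $L := \mu^*(A)$ for the common first two expressions and $R$ for the rightmost one. I interpret each feasible process $(M_t)_{t \in \N_0}$ as nonnegative and \emph{adapted}; this is implicit in the payoff interpretation and is also necessary, since without adaptedness one could take $M_t = \bm{1}_A$ and collapse $R$ to $\nu^*(A)$, which can be strictly smaller than $\mu^*(A)$ by Example~\ref{ex_Cauchy}. Values at infinite stopping times are interpreted via the convention $M_\infty = \limsup_{t \to \infty} M_t$, consistent with the paper's convention for e-processes. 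The plan is to prove $R \le L$ and $L \le R$ separately.

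For $R \le L$, given any admissible stopping time $\tau$ (that is, $A \subseteq \{\tau < \infty\}$), consider the nondecreasing adapted indicator $M_t = \bm{1}_{\{\tau \le t\}}$, which satisfies $\sup_t M_t = \bm{1}_{\{\tau < \infty\}} \ge \bm{1}_A$. Monotonicity combined with the $\limsup$ convention gives $M_\rho \le M_\infty = \bm{1}_{\{\tau < \infty\}}$ for every stopping time $\rho$, with equality for $\rho \equiv \infty$. Hence $\sup_\rho \E_\P[M_\rho] = \P(\tau < \infty)$; taking $\sup$ over $\P$ and then $\inf$ over $\tau$ yields $R \le L$.

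For $L \le R$, I recover a stopping time from any feasible $M$ by first-passage. Fix $\varepsilon \in (0,1)$ and set
\[
\tau_\varepsilon := \inf\{t \in \N_0 : M_t \ge 1-\varepsilon\},
\]
which is a stopping time by adaptedness of $M$. Since $\sup_t M_t \ge \bm{1}_A = 1$ on $A$, there is some $t$ with $M_t \ge 1 - \varepsilon/2 > 1-\varepsilon$ on $A$, so $A \subseteq \{\tau_\varepsilon < \infty\}$ and $\tau_\varepsilon$ is admissible for $L$. The pointwise bound $(1-\varepsilon)\bm{1}_{\{\tau_\varepsilon < \infty\}} \le M_{\tau_\varepsilon}$ (the case $\tau_\varepsilon = \infty$ uses only $M \ge 0$) gives, for each $\P \in \Pcal$,
\[
(1-\varepsilon)\P(\tau_\varepsilon < \infty) \le \E_\P[M_{\tau_\varepsilon}] \le \sup_{\rho \in \Tcal} \E_\P[M_\rho].
\]
Taking $\sup_\P$ and using $L \le \sup_\P \P(\tau_\varepsilon < \infty)$ yields $(1-\varepsilon) L \le \sup_\P \sup_\rho \E_\P[M_\rho]$; infimizing over $M$ and sending $\varepsilon \downarrow 0$ completes the argument.

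The only real obstacle is the unavoidable $(1-\varepsilon)$ slack in the first-passage construction: the super-replication condition $\sup_t M_t \ge \bm{1}_A$ does not guarantee that $M$ ever reaches the value $1$ exactly in finite time, so a limiting argument on the threshold is genuinely needed. The remaining bookkeeping — evaluating $M_\rho$ at $\rho = \infty$ via the $\limsup$ convention and checking that $\tau_\varepsilon$ is admissible — is routine.
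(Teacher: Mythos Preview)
Your proof is correct and follows essentially the same route as the paper: construct $M_t = \bm{1}_{\{\tau \le t\}}$ from a feasible stopping time for one direction, and recover a stopping time via the first-passage time $\tau_\varepsilon = \inf\{t : M_t \ge 1-\varepsilon\}$ for the other, then let $\varepsilon \to 0$. If anything, you are slightly more careful than the paper in making the adaptedness assumption explicit and in handling the $\rho = \infty$ convention.
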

  \begin{proof}
  The first expression is simply the definition of $\outerLocal(A)$, and is included for convenience of comparison to the second expression.
    We prove both directions in turn.
    \begin{itemize}
    \item
      For $\ge$, fix any feasible $\tau$ on the left. Then $M_t = \bm{1}_{\tau < \infty}$ is feasible on the right, by monotonicity in time it is best to take $\rho=\infty$, upon which $\E_\P [M_\rho] = \P(\tau<\infty)$.

    \item
      For $\le$, fix any feasible $(M_t)_t$ on the right. For any $\epsilon \in (0,1)$, consider the stopping time $\tau = \inf \{t : M_t \ge 1-\epsilon\}$, so that $A \subseteq \{\tau < \infty\}$ and hence $\tau$ is feasible on the left. Moreover, $\P(\tau <\infty)(1-\epsilon) \le \E_\P[M_\tau] \le \sup_\rho \E_\P[M_\rho]$. Taking $\epsilon \to 0$ proves the claim.
    \end{itemize}
This completes the proof.
  \end{proof}

\section{Line-crossing inequalities and a composite SLLN}
\label{sec:slln}

Before we introduce our composite strong law of large numbers (SLLN), we present an important technical tool that may be of independent interest.

\subsection{An $L^1$-type line-crossing inequality}

Recall the following famous inequality due to Dubins and Savage~\cite{dubins_tchebycheff-like_1965}: for any square integrable martingale $(M_t)_{t \in \N}$ with $M_0 =0$, having predictable quadratic variation $V_t = \sum_{s=1}^t \E[(M_s-M_{s-1})^2|\Fcal_{s-1}]$, we have for any constants $\gamma,\epsilon>0$,
\[
%using a=\gamma b, and \epsilon = b in original DS
\P\left( \sup_{t \in \N} \frac{M_t}{\gamma+V_t} \geq \epsilon \right) \leq \frac1{\gamma \epsilon^2+1}.
\]

The following theorem can be understood as an $L^1$-type Dubins--Savage inequality. Namely, it gives a probability bound on the likelihood that a random walk starting at zero with i.i.d.\ centered increments (that are only required to be in $L^1$), ever crosses a line with positive slope and intercept. To the best of our knowledge, such inequalities have only been proven in the presence of higher moments for the random walk increments; see, for example, Khan~\cite{khan_$l_p$-version_2009} for $L^p$ variants for $1 < p \leq 2$, and Howard et al.~\cite{howard_exponential_2018} for other variants under two, three, and infinite moments.
It might be of interest to improve the constants in the statement, although this is not needed for our purposes. 

\begin{theorem} \label{T_line_crossing}
Suppose that $(X_t)_{t \in \N}$ is i.i.d.\ with $\E[|X_1|] < \infty$ and $\E[X_1] = 0$ and define the tail function
\begin{equation} \label{T_line_crossing_eq0}
r(K) = \E\left[ |X_1 | \bm1_{\{|X_1 | > K\}} \right], \qquad K \geq 0.
\end{equation}
Then for any positive numbers $\varepsilon$, $\gamma$, and any $K \geq 1$ one has
\[
\P\left( \sup_{t \in \N} \frac{1}{\gamma + t} \left| \sum_{s=1}^t X_s \right| > \varepsilon + r(K) \right)
\le
\frac{8 K^2}{ \gamma \varepsilon^2} + \left( \frac{16}{\varepsilon^2} + 2 \right) r(K).
\]
\end{theorem}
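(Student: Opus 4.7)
My plan is a truncation argument at level $K$. Decompose $X_s = A_s + B_s$ with $A_s = X_s \bm1_{\{|X_s| \le K\}}$ and $B_s = X_s \bm1_{\{|X_s| > K\}}$, and set $m = \E[A_1]$. Since $\E[X_1] = 0$ we have $|m| = |\E[B_1]| \le r(K)$, and the centered variable $\tilde A_s := A_s - m$ is bounded by $2K$ in absolute value with $\mathrm{Var}(\tilde A_s) \le K^2$. Writing $\tilde P_t := \sum_{s \le t} \tilde A_s$, $Q_t := \sum_{s \le t} B_s$ and $S_t := \sum_{s \le t} X_s$, the triangle inequality gives
$$
\frac{|S_t|}{\gamma + t} \;\le\; \frac{|\tilde P_t|}{\gamma + t} + \frac{t|m|}{\gamma + t} + \frac{|Q_t|}{\gamma + t} \;\le\; \frac{|\tilde P_t|}{\gamma + t} + r(K) + \frac{|Q_t|}{\gamma + t}.
$$
Hence $\{\sup_t |S_t|/(\gamma + t) > \varepsilon + r(K)\}$ is contained, by pigeonhole, in $\{\sup_t |\tilde P_t|/(\gamma + t) > \varepsilon/2\} \cup \{\sup_t |Q_t|/(\gamma + t) > \varepsilon/2\}$, and a union bound reduces the proof to controlling the two corresponding probabilities separately.

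The centered bounded piece is handled by the classical $L^2$ Dubins--Savage inequality cited in the statement. Since $\tilde P_t$ is a centered i.i.d.\ sum with per-step variance at most $K^2$, an appropriate rescaling (using that $\sum_{s \le t}\mathrm{Var}(\tilde A_s) \le tK^2$, so one replaces the weight $\gamma + t$ by $K^2(\gamma + t)$ and the threshold $\varepsilon/2$ by $\varepsilon/(2K^2)$) yields $\P(\sup_t |\tilde P_t|/(\gamma + t) > \varepsilon/2) \le 8K^2/(\gamma\varepsilon^2)$, which matches the first term of the stated bound.

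The tail piece is the crux. The plan is to bound $|Q_t| \le R_t := \sum_{s \le t} |B_s|$, a non-decreasing sum of i.i.d.\ non-negative variables with mean $r(K)$, and then perform a \emph{second} truncation at an auxiliary level $L \ge K$ by setting $\bar B_s := |B_s| \wedge L$. Because $\E[\bar B_s^2] \le L\,\E[|B_s|] = L\, r(K)$, the centered truncated process $\sum_{s \le t}(\bar B_s - \E[\bar B_s])$ is now an $L^2$ martingale with per-step variance at most $L\, r(K)$ and can be fed into Dubins--Savage, giving a contribution of order $L\, r(K)/(\gamma\varepsilon^2)$. The residual $\sum_{s \le t}(|B_s| - L)_+$ has expectation at most $t\,r(K)$ and should be handled by a direct $L^1$ Markov bound, producing the additive $2r(K)$ term. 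Choosing $L$ of order $\gamma$ (so that the Dubins--Savage factor $L/\gamma$ becomes a constant) is expected to collapse the Dubins--Savage piece to the form $16 r(K)/\varepsilon^2$, up to tracking of constants across the pigeonhole splits.

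I expect the main obstacle to be precisely this tail step. Because $|X_1|$ is only assumed to lie in $L^1$, neither $Q_t$ nor $R_t$ has finite variance in general, so Dubins--Savage cannot be applied without the secondary truncation; yet a crude Markov-plus-dyadic-peeling argument on $\sup_t R_t/(\gamma + t)$ diverges logarithmically. The delicate point is to choose the auxiliary level $L$ and to split the slack $\varepsilon/2$ between the truncated and residual parts so that the final bound carries no surviving factor of $K$ in the numerator of the $r(K)$ term and no $\log t$ factor. This is exactly what distinguishes the $L^1$ statement from the easier $L^p$ versions ($1 < p \le 2$) already available in the literature.
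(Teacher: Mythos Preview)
Your decomposition into a bounded centered piece and a tail piece is fine, and your application of Dubins--Savage to $\tilde P_t$ correctly produces the $8K^2/(\gamma\varepsilon^2)$ term. The gap is in the tail step, specifically in the treatment of the residual $U_t = \sum_{s\le t}(|B_s|-L)_+$ after your secondary truncation at a \emph{fixed} level $L$. The ``direct $L^1$ Markov bound'' you invoke controls $\P(U_t/(\gamma+t)>\delta)$ for each fixed $t$, but not the supremum over $t$: the increments $(|B_s|-L)_+$ are still i.i.d.\ with only a first moment, so $U_t$ is in exactly the same situation as your original $R_t$, and the logarithmic divergence you yourself flagged for $R_t$ reappears unchanged for $U_t$. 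Choosing $L$ of order $\gamma$ helps the Dubins--Savage factor but does nothing for the residual: with a fixed $L$ one has $\sum_s \P(|X_s|>L)=\infty$, so no Borel--Cantelli control is available either. (There is also a minor budgeting issue: passing from $Q_t$ to $R_t=\sum|B_s|$ injects a second deterministic $r(K)$ through $\E[\bar B_1]$, so your threshold would drift from $\varepsilon+r(K)$ toward $\varepsilon+2r(K)$.)

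The paper resolves this by making the second truncation level \emph{time-dependent}, in the style of Etemadi's proof of the SLLN. It writes $X_s = Y_s + Z_s + R_s$ with $Y_s$ the centered truncation at $K$, $Z_s$ the centered piece on $\{K<|X_s|\le s-1\}$, and $R_s$ the centered piece on $\{|X_s|>K\vee(s-1)\}$. The middle piece $Z_s$ has $\E[Z_s^2]\le \E[X_1^2\bm1_{\{K<|X_1|\le s-1\}}]$, and after summation by parts and Kolmogorov's maximal inequality one gets $\sum_s \E[Z_s^2]/s^2 \le r(K)$, yielding the $16r(K)/\varepsilon^2$ term. The residual $R_s$ is then handled by Borel--Cantelli: on the event $\{|X_s|\bm1_{\{|X_s|>K\}}\le s-1 \text{ for all } s\}$ the random part of every $R_s$ vanishes, and the complement has probability at most $\sum_s \P(|X_1|\bm1_{\{|X_1|>K\}}>s-1)\le 2r(K)$ (this is where $K\ge 1$ is used). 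The growing truncation level is exactly what makes the Borel--Cantelli sum finite and kills the $\log t$ factor; a fixed level $L$ cannot do this.
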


\begin{proof}
We use the triangle inequality to bound
\begin{equation} \label{T_line_crossing_eq1}
\frac{1}{\gamma + t} \left| \sum_{s=1}^t X_s \right|
\le
\frac{1}{\gamma + t} \left| \sum_{s=1}^t Y_s \right|
+
\frac{1}{t} \left| \sum_{s=1}^t Z_s \right|
+
\frac{1}{t} \left| \sum_{s=1}^t R_s \right|,
\end{equation}
where
\begin{align*}
Y_s &= X_s \bm1_{\{|X_s| \le K\}} - \E[ X_s \bm1_{\{|X_s| \le K\}} ], \\
Z_s &= X_s \bm1_{\{K < |X_s| \le s - 1\}} - \E[ X_s \bm1_{\{K < |X_s| \le s - 1\}} ], \\
R_s &= X_s \bm1_{\{|X_s| > K \vee (s - 1)\}} - \E[ X_s \bm1_{\{|X_s| > K \vee (s - 1)\}} ].
\end{align*}
We handle the three terms on the right-hand side of \eqref{T_line_crossing_eq1} one by one.

\textit{Step 1:} Since the $Y_s$ are centered i.i.d.\ with variance bounded by $K^2$, the classical Dubins--Savage line-crossing inequality \cite{dubins_tchebycheff-like_1965} implies that
\begin{equation}
\P\left( \exists t \in \N \text{ with }  \sum_{s=1}^t Y_s > \beta + \alpha K^2  t  \right) \le \frac{1}{1 + \alpha \beta} \le \frac{1}{\alpha \beta}
\end{equation}
for any positive numbers $\alpha,\beta$. We apply this with $\alpha = \varepsilon / K^2$ and $\beta = \varepsilon \gamma$ to get the following tail bound for the first term on the right-hand side of \eqref{T_line_crossing_eq1}:
\begin{equation} \label{T_line_crossing_eq2}
\P\left( \sup_{t \in \N} \frac{1}{\gamma + t} \left| \sum_{s=1}^t Y_s \right| > \varepsilon \right)
\le 2\, \P\left( \exists t \in \N \text{ with }  \sum_{s=1}^t Y_s > \varepsilon \gamma + \varepsilon  t  \right)
\le \frac{2 K^2}{\gamma \varepsilon^2}.
\end{equation}

\textit{Step 2:} Summation by parts yields the identity
\begin{equation}
\frac1t \sum_{s=1}^t Z_s = \sum_{s=1}^t \frac{Z_s}{s} - \frac1t \sum_{j=2}^t \sum_{s=1}^{j-1} \frac{Z_s}{s}.
\end{equation}
Writing $S = \sup_{t \in \N} | \sum_{s=1}^t {Z_s}/{s} |$ we deduce $ \left| \sum_{s=1}^t Z_s \right|/t \le S + \sum_{j=1}^t S/t = 2S$ for all $t \in \N$. 
Noting that $\sum_{s = 1}^t Z_s/s$ is a martingale with independent increments, we may use Kolmogorov's maximal inequality and monotone convergence to get
\begin{equation}
\P\left( \sup_{t \in \N} \frac1t \left| \sum_{s=1}^t Z_s \right| > \varepsilon \right) \le \P\left( \sup_{t \in \N} \left| \sum_{s=1}^t \frac{Z_s}{s} \right| > \frac{\varepsilon}{2} \right) \le \frac{4}{\varepsilon^2} \E\left[ \sum_{s=1}^\infty \frac{Z_s^2}{s^2} \right].
\end{equation}
Since $\E[Z_s^2] \le \E[ X_1^2 \bm1_{\{K < |X_1| \le s - 1\}} ]$, the expectation on the right-hand side can be bounded as follows:
\[
\E\left[ \sum_{s=1}^\infty \frac{Z_s^2}{s^2} \right] \le \E\left[ X_1^2 \bm1_{\{|X_1| > K\}} \sum_{s=1}^\infty \bm1_{\{|X_1| \le s-1\}} \frac{1}{s^2} \right]
\le \E[ |X_1| \bm1_{\{|X_1| > K\}}],
\]
using in the last step the inequality $\sum_{s=1}^\infty \bm1_{\{x \le s-1\}} {1}/{s^2} \le \int_x^\infty {1}/{s^2}\d s = {1}/{x}$, $x \in (0,\infty)$. This establishes the tail bound
\begin{equation} \label{T_line_crossing_eq3}
\P\left( \sup_{t \in \N} \frac1t \left| \sum_{i=1}^t Z_s \right| > \varepsilon \right)
\le \frac{4}{\varepsilon^2} r(K),
\end{equation}
where $r(K)$ is the tail function \eqref{T_line_crossing_eq0}.

\textit{Step 3:} Write $R_s = X_s \bm1_{\{|X_s| > K \vee (s - 1)\}} - c_s$ with $c_s = \E[ X_s \bm1_{\{|X_s| > K \vee (s - 1)\}} ]$. On the event where $|X_s| \bm1_{\{|X_s| > K\}} \le s-1$ for all $s \in \N$, one has $R_s = -c_s$ for all $s \in \N$, and hence
\[
\frac{1}{t} \left| \sum_{s=1}^t R_s \right| = \frac{1}{t} \left| \sum_{s=1}^t c_s \right| \le \frac{1}{t} \sum_{s=1}^t \E[ |X_s| \bm1_{\{|X_s| > K \}} ] = \E[ |X_1| \bm1_{\{|X_1| > K \}} ] = r(K)
\]
for all $t \in \N$. Therefore,
\[
\P\left( \text{$|X_s| \bm1_{\{|X_s| > K\}} \le s-1$ for all $s \in \N$} \right)
\le
\P\left( \sup_{t \in \N} \frac{1}{t} \left| \sum_{s=1}^t R_s \right| \le r(K) \right).
\]
We now take complements, apply the union bound, and use that $\sum_{n \in \N} \P(\xi > n-1) \le \P(\xi > 0) + \E[\xi]$ for any nonnegative random variable $\xi$ to get
\begin{equation} \label{T_line_crossing_eq4}
\begin{aligned}
\P\left( \sup_{t \in \N} \frac{1}{t} \left| \sum_{s=1}^t R_s \right| > r(K) \right)
&\le
\sum_{s \in \N} \P\left( |X_1| \bm1_{\{|X_1| > K\}} > s-1 \right) \\
&\le \P(|X_1| > K) + \E[ |X_1| \bm1_{\{|X_1| > K\}}] \\
%&\le
%\sum_{s \in \N} \P\left( |X_1| \bm1_{\{|X_1| > K\}} \ge s \right) \\
&\le 2 \E[ |X_1| \bm1_{\{|X_1| > K\}}] = 2 r(K).
\end{aligned}
\end{equation}
Here the third inequality follows because $K \ge 1$.

\textit{Step 4:} Combining \eqref{T_line_crossing_eq1}, \eqref{T_line_crossing_eq2}, \eqref{T_line_crossing_eq3}, and \eqref{T_line_crossing_eq4} yields
\[
\P\left( \sup_{t \in \N} \frac{1}{\gamma + t} \left| \sum_{s=1}^t X_s \right| > 2 \varepsilon + r(K) \right)
\le
\frac{2 K^2}{\gamma \varepsilon^2} + \left( \frac{4}{\varepsilon^2} + 2 \right) r(K).
\]
Replacing $\varepsilon$ by $\varepsilon/2$ completes the proof.
\end{proof}

By choosing $K = \gamma^{1/3}$ one obtains the following corollary. In particular, this makes it clear that the left-hand side converges to $0$ as $\gamma$ tends to $\infty$, and that this occurs at a rate that only depends on the tail function $r(K)$. Recall that $r(K)$ decreases to $0$ as $K$ tends to $\infty$.

\begin{corollary} \label{C_line_crossing_1}
In the setting of Theorem~\ref{T_line_crossing} one has
\begin{align} \label{eq:230318.1}
\P\left( \sup_{t \in \N} \frac{1}{\gamma + t} \left| \sum_{s=1}^t X_s \right| > 2\varepsilon \right)
\le
\frac{8}{\varepsilon^2} \gamma^{-1/3} + \left( \frac{16}{\varepsilon^2} + 2 \right) r(\gamma^{1/3})
\end{align}
for any positive $\varepsilon> 0$ and any large enough $\gamma \geq 0$ such that $r(\gamma^{1/3}) \le \varepsilon$. Since 
\[\frac{1}{k}  \Big| \sum_{s=1}^k X_s  \Big| \le 2 \sup_{t \in \N} \frac{1}{k + t}  \Big| \sum_{s=1}^t X_s \Big|,
\] we further conclude that
\begin{align} \label{eq:230318.2}
\P\left( \frac{1}{k} \left| \sum_{s=1}^k X_s \right| > \varepsilon \right)
\le
\frac{128}{\varepsilon^2} k^{-1/3} + \left( \frac{256}{\varepsilon^2} + 2 \right) r(k^{1/3})
\end{align}
for any positive $\varepsilon$ and any $k \in \N$ such that $r(k^{1/3}) \le \varepsilon$.
\end{corollary}

\subsection{A composite strong law of large numbers}\label{sec:comp-SLLN}

The line-crossing inequality of Theorem~\ref{T_line_crossing} implies a composite strong law of large numbers. Specifically, if $\outerLocal$ is induced by a family $\Pcal$ of i.i.d.\ laws that satisfy a certain uniform integrability condition, then the event that the time averages of $(X_t)_{t \in \N}$ diverge,
\[
A_\textnormal{div} = \left\{ \text{the limit $\lim_{t \to \infty} \frac1t \sum_{i=1}^t X_i$ does not exist} \right\},
\]
will be a $\outerLocal$-nullset. 

\begin{theorem} \label{T_LLN}
Let $\Pcal$ be a family of probability measures $\P$ under which $(X_t)_{t \in \N}$ is i.i.d.\ with $\E_\P[ |X_1| ] < \infty$. Assume also that $\Pcal$ satisfies the centered uniform integrability condition
\begin{equation} \label{T_LLN_eq1}
\lim_{K \to \infty} \sup_{\P \in \Pcal} \E_\P\left[ |X_1 - \E_\P[X_1] | \bm1_{\{|X_1 - \E_\P[X_1]| > K\}} \right] = 0.
\end{equation}
Then $\outerLocal(A_\textnormal{div}) = 0$.
\end{theorem}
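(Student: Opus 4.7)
The plan is to verify $\mu^*(A_\textnormal{div}) = 0$ directly from the definition by producing, for every $\delta > 0$, a single stopping time $\tau$ with $A_\textnormal{div} \subseteq \{\tau < \infty\}$ and $\sup_{\P \in \Pcal} \P(\tau < \infty) \le \delta$. (One could alternatively invoke Theorem~\ref{T_composite_Ville} and assemble a witness e-process via \eqref{T_composite_Ville_eq1}, but the stopping-time route is more transparent.) The first step is to decompose $A_\textnormal{div}$ via the Cauchy criterion in $\R$: writing $\bar X_t := \frac{1}{t}\sum_{s=1}^t X_s$,
\[
A_\textnormal{div} = \bigcup_{n \in \N} A(1/n), \qquad A(\varepsilon) := \bigcap_{M \in \N} \bigcup_{s,t \geq M} \{ |\bar X_t - \bar X_s| \geq \varepsilon \}.
\]
By countable subadditivity of $\mu^*$ (Lemma~\ref{L_properties_of_mu_star}), it suffices to show $\mu^*(A(\varepsilon)) = 0$ for each fixed $\varepsilon > 0$.

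For the main step, fix $\varepsilon, \delta > 0$ and $N \in \N$ (to be chosen), and consider
\[
\tau = \inf\bigl\{t > N : \max_{N \leq s \leq t} |\bar X_t - \bar X_s| > \varepsilon/2 \bigr\},
\]
which is clearly an $(\Fcal_t)$-stopping time. By construction, $A(\varepsilon) \subseteq \{\tau < \infty\}$: on $A(\varepsilon)$, taking $M = N$ yields $s \le t$ in $[N,\infty)$ with $|\bar X_t - \bar X_s| \ge \varepsilon > \varepsilon/2$, so $\tau \le t$. To bound $\P(\tau < \infty)$, I exploit the crucial cancellation of the (unknown, $\P$-dependent) mean $\mu(\P) := \E_\P[X_1]$: setting $Y^\P_u := X_u - \mu(\P)$, one has $\bar X_t - \bar X_s = \bar Y^\P_t - \bar Y^\P_s$, so by the triangle inequality
\[
\{\tau < \infty\} \subseteq \bigl\{\sup_{t \geq N} |\bar Y^\P_t| > \varepsilon/4\bigr\}.
\]

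The third step is to estimate this latter probability via the line-crossing inequality applied to the centered i.i.d.\ sequence $(Y^\P_s)$. Since $\frac{1}{t} \leq \frac{2}{N+t}$ for $t \ge N$, one has $\sup_{t \geq N} |\bar Y^\P_t| \leq 2\sup_{t \in \N} \frac{|\sum_{s=1}^t Y^\P_s|}{N+t}$, and Theorem~\ref{T_line_crossing} applied with $\gamma = N$ gives, for any $K \ge 1$ satisfying $r_\P(K) \le \varepsilon/16$,
\[
\P\bigl(\sup_{t \ge N} |\bar Y^\P_t| > \varepsilon/4\bigr)
\leq \frac{C_1 K^2}{N \varepsilon^2} + \Bigl( \frac{C_2}{\varepsilon^2} + 2 \Bigr) r_\P(K),
\]
with absolute constants $C_1, C_2$, where $r_\P(K) = \E_\P[|Y^\P_1|\bm1_{\{|Y^\P_1| > K\}}]$.

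The centered uniform integrability assumption \eqref{T_LLN_eq1} is precisely what forces $\sup_{\P \in \Pcal} r_\P(K) \downarrow 0$ as $K \to \infty$, making the above bound uniform. So I would first choose $K$ large enough that both $\sup_{\P \in \Pcal} r_\P(K) \le \varepsilon/16$ and $(\frac{C_2}{\varepsilon^2}+2)\sup_{\P \in \Pcal} r_\P(K) \le \delta/2$, and then choose $N$ large enough that $C_1 K^2/(N\varepsilon^2) \le \delta/2$. This yields $\sup_{\P \in \Pcal} \P(\tau < \infty) \le \delta$ and thus $\mu^*(A(\varepsilon)) \le \delta$; letting $\delta \to 0$ gives $\mu^*(A(\varepsilon)) = 0$, and then countable subadditivity closes the argument. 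The main obstacle is making the tail bound uniform across $\Pcal$ in the face of varying means; this uniformity is purchased entirely by \eqref{T_LLN_eq1}, and without it no such stopping time could be constructed.
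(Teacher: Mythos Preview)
Your proof is correct and follows essentially the same approach as the paper: decompose $A_\textnormal{div}$ via a Cauchy-type criterion, build a stopping time that detects large oscillations of $\bar X_t$ after a burn-in index, exploit the cancellation $\bar X_t - \bar X_s = \bar Y^\P_t - \bar Y^\P_s$ to center by the unknown mean, and then control everything uniformly via Theorem~\ref{T_line_crossing} and the centered uniform integrability assumption. The only cosmetic difference is that your stopping time compares $\bar X_t$ to \emph{all} references $\bar X_s$ with $N \le s \le t$ (rather than the single reference $\bar X_k$ used in the paper), which lets you bound $\{\tau<\infty\}$ directly by $\{\sup_{t\ge N}|\bar Y^\P_t|>\varepsilon/4\}$ and thus avoid the separate weak-law estimate on $|\bar X_k - \E_\P[X_1]|$ that the paper needs via the second part of Corollary~\ref{C_line_crossing_1}.
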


Let us contrast this composite SLLN result with classical SLLNs. The latter assume some condition (e.g.\ i.i.d.\ with finite variance) and prove that $A_\textnormal{div}$ has measure zero. That is, they show that for  any set of distributions $\Pcal$ satisfying the assumed condition, the maximum likelihood measure is zero, that is $\outerNatural(A_\textnormal{div}) = 0$. This in turn implies that $\P(A_\textnormal{div}) = 0$ for every $\P \in \Pcal$, providing a strong argument for why $A_\textnormal{div}$ will not happen. Our composite SLLN instead strengthens this to $\outerLocal(A_\textnormal{div}) = 0$. This implies that there is a $\Pcal$-e-process $(E_t)_{t \in \N_0}$ that explodes when $A_\textnormal{div}$ occurs. Equivalently, for every confidence level $\delta \in (0,1)$, there is a stopping time $\tau$ that detects (by stopping at some finite time) that $A_\textnormal{div}$ is occurring, with a type-I error bounded by $\delta$ for all $\P \in \Pcal$.
In words, $\outerNatural(A_\textnormal{div}) = 0$ means that convergence occurs, while $\outerLocal(A_\textnormal{div}) = 0$ means that we can in addition detect divergence in finite time.

An example of $\Pcal$ that satisfies condition~\eqref{T_LLN_eq1} is a location family created by translating a distribution with finite mean.
To elaborate, let $\phi(x)$ be a density such that $\int |x| \phi(x) dx < \infty$. Abusing notation to use $\phi(x)$ to also denote the distribution with that density, define $\Pcal^\phi= \{\phi(x-a)^\infty \}_{a \in \mathbb R}$ as distributions yielding i.i.d.\ draws from some shifted version of $\phi$. Then $\Pcal^\phi$ satisfies~\eqref{T_LLN_eq1}. In fact, more can be said. For any given set of i.i.d.\ product distributions $\Pcal$, let $\Pcal^0$ be the family of distributions obtained by centering each of the distributions in $\Pcal$, so that every $\P\in\Pcal^0$ yields zero-mean i.i.d.\ draws. Then $\Pcal$ satisfies~\eqref{T_LLN_eq1} if and only if $\Pcal^0$ satisfies the more standard uniform integrability condition, which we know (from the de la Vall\'ee-Poussin theorem) holds if and only if there exists a nonnegative increasing function $G$ such that $\lim_{x \to \infty} G(x)/x = \infty$ and $\sup_{\P \in \Pcal^0} \mathbb E[G(|X_1|)] < \infty$.

An example of $\Pcal$ that fails condition~\eqref{T_LLN_eq1} is the set of all truncated zero-centered Cauchy distributions encountered in~Example~\ref{ex_Cauchy}. This example, and especially Proposition~\ref{prop:cauchy}, reminds us that if uniform integrability conditions are dropped entirely, then $\outerLocal(A_\textnormal{div})$ will not be zero, but could even be equal one. Thus, condition~\eqref{T_LLN_eq1} is not only sufficient for concluding $\outerLocal(A_\textnormal{div})=0$, but also that some condition like~\eqref{T_LLN_eq1} is necessary to restrict $\Pcal$. However, we leave the exact characterization of the necessary condition for future work.
% mean-zero Gaussians with any positive variance.

\begin{proof}[Proof of Theorem~\ref{T_LLN}]
We use the notation $\overline X_t =  \sum_{s=1}^t X_s/t$. For each $n \in \N$, define the event
\begin{equation}
A_n = \left\{ \limsup_{t \to \infty} \overline X_t - \liminf_{t \to \infty} \overline X_t > \frac2n \right\}.
\end{equation}
Then $A_\text{div} \subseteq \bigcup_{n \in \N} A_n$, so we wish to bound $\outerLocal(A_n)$. To this end, observe that for any $k,n \in \N$ one has $A_n \subseteq \{\tau_{k,n} < \infty\}$, where
\begin{equation} \label{T_LLN_eq2}
\tau_{k,n} = \inf\left\{ t \ge 0 \colon | \overline X_{k+t} - \overline X_k | > \frac1n \right\}.
\end{equation}
(Indeed, if $\tau_{k,n} = \infty$, $\overline X_{k+t}$ stays within $1/n$ of $\overline X_k$ for all $t \ge 0$. But then so do both $\limsup_{t \to \infty} \overline X_t$ and $\liminf_{t \to \infty} \overline X_t$. This shows that $\{\tau_{k,n} = \infty\} \subseteq A_n^c$.) We conclude that
\[
\outerLocal(A_n) \le \sup_{\P \in \Pcal} \P( \tau_{k,n} < \infty).
\]
It is shown below that the right-hand side tends to $0$ as $k$ tends to $\infty$, for each $n \in \N$. Thus $\outerLocal(A_n) = 0$ for all $n \in \N$, and hence $\outerLocal(A_\text{div}) \le \sum_{n \in \N} \outerLocal(A_n) = 0$ as required.

It remains to show, for any fixed $n \in \N$, that $\sup_{\P \in \Pcal} \P( \tau_{k,n} < \infty)$ tends to $0$ as $k$ tends to $\infty$. Observe that for any $k \in \N$ and $c \in \R$,
\begin{equation}\label{eq_T_LNN_5}
| \overline X_{k+t} - \overline X_k | \le \frac{1}{k+t} \left| \sum_{i=1}^t (X_{k+i} - c) \right| + 2 | \overline X_k - c |.
\end{equation}
Thus for any $\P \in \Pcal$ and $k \in \N$ we may take $c = \E_\P[ X_1 ]$ to obtain
\[
\P( \tau_{k,n} < \infty) \le \P \left( \sup_{t \in \N} \frac{1}{k+t} \left| \sum_{i=1}^t (X_{k+i} - \E_\P[X_1]) \right| \ge \frac{1}{2n} \right)
+ \P \left( | \overline X_k - \E_\P[X_1] | \ge \frac{1}{4n} \right).
\]
The line-crossing inequality \eqref{eq:230318.1} and the inequality \eqref{eq:230318.2}  in Corollary~\ref{C_line_crossing_1} 
now show that for $k$ large enough the right-hand side is bounded by
\[
C \left( k^{-1/3} + \E_\P\left[ |X_1 - \E_\P[X_1] | \bm1_{\{|X_1 - \E_\P[X_1]| > k^{1/3}\}} \right] \right)
\]
for a constant $C$ which, for instance, can be taken to be $C =  4352 n^2 + 4$.  Thanks to the assumption \eqref{T_LLN_eq1}, it follows that $\sup_{\P \in \Pcal} \P( \tau_{k,n} < \infty)$ tends to $ 0$ as $k$ tends to $\infty$. This completes the proof of the theorem.
\end{proof}

\begin{remark}
With $\Pcal$ as in Theorem~\ref{T_LLN}, this result together with the composite Ville's theorem, Theorem~\ref{T_composite_Ville}, shows that there exists an e-process for $\Pcal$ which converges to infinity on $A_\textnormal{div}$. An inspection of the proofs shows that this e-process can be chosen as
\[
E_t = \sum_{n \in \N} \bm1_{\{\tau_{k_n,n} \le t\}}, \qquad t \in \N_0,
\]
where $\tau_{k_n,n}$ is given by \eqref{T_LLN_eq2} with $k=k_n$, and $k_n \gg n^{6}2^{3n}$ is chosen large enough that
\[
(4352 n^2 + 4) \left( k_n^{-1/3} + \sup_{\P \in \Pcal} \E_\P\left[ |X_1 - \E_\P[X_1] | \bm1_{\{|X_1 - \E_\P[X_1]| > k_n^{1/3}\}} \right] \right) \le 2^{-n}.
\]
This yields a game-theoretic composite SLLN for unbounded observations without requiring any moment beyond the first. It may be contrasted with other game-theoretic SLLNs, for example by~\cite{shafer2001probability} and \cite{kumon2008simple}, which are pointwise claims and hold for bounded observations.
\end{remark}

\begin{remark}
    Chung \cite{Chung:SLLN} proved a uniform strong law of large numbers that can be used instead of our line-crossing inequality to prove Theorem~\ref{T_LLN}. Here, we summarize the statement from Shorack's textbook \cite[Theorem 12.1]{Shorack}, which also has a self-contained proof: suppose $(X_t)_{t \in \N}$ is i.i.d.~for each $\P \in \Pcal$, where $\Pcal$ satisfies the uniform integrability condition
    \begin{equation}\label{eq:chung-ui}
        \lim_{K \to \infty}\sup_{\P \in \Pcal} \E_\P[|X_1| \geq K] = 0.
    \end{equation}
    Then, for any $\varepsilon > 0$,
    \begin{equation}\label{eq:chung-slln}
        \lim_{n \to \infty}\sup_{\P \in \Pcal} \P\left(\sup_{m \geq n} |\overline X_m - \E_\P[X_1]| \geq \varepsilon\right) = 0.
    \end{equation}
    Interestingly, our slightly weaker assumption~\eqref{T_LLN_eq1} is already sufficient to yield Chung's conclusion~\eqref{eq:chung-slln}: given a family $\Pcal$ satisfying \eqref{T_LLN_eq1}, define $\Pcal'=\{\P - \E_\P[X_1]: \P\in \Pcal\}$, where we write ``$\P-\E_\P[X_1]$'' as shorthand for the law under $\P$ of the centered sequence $(X_t-\E_\P[X_1])_{t\in\N}$.  
    Then $\Pcal'$ satisfies \eqref{eq:chung-ui}, and applying Chung's result to $\Pcal'$ yields~\eqref{eq:chung-slln}, first with $\Pcal'$ in place of $\Pcal$, but then also for $\Pcal$ itself. Once this has been done, a proof of Theorem~\ref{T_LLN} can be obtained as follows. Up until \eqref{eq_T_LNN_5} the proof is unchanged, but then, instead of \eqref{eq_T_LNN_5}, one uses the triangle inequality to get
    \[
    |\overline X_{k+t} - \overline X_k| \le |\overline X_{k+t} - c| + |\overline X_k - c|
    \]
    for any constant $c$. For each $\P \in \Pcal$ one then takes $c = \E_\P[X_1]$ to get
    \[
    \P(\tau_{k,n} < \infty) \le 2\, \P\left( \sup_{t \in \N} |\overline X_{k+t} - \E_\P[X_1]| \ge \frac{1}{2n} \right).
    \]
    Applying \eqref{eq:chung-slln} now yields $\lim_{k \to \infty} \sup_{\P \in \Pcal} \P(\tau_{k,n} < \infty) = 0$, as required.
    %In the special case that $\E_\P[X]=0$ for all $\P \in \Pcal$, Chung's SLLN can be proven to imply Theorem~\ref{T_LLN}, by constructing a sequence of stopping times to be used within our e-process construction. We omit the details for brevity. The exact relationship between statements of the form~\eqref{eq:chung-slln} and the inverse capital measure equaling zero seems a subtle one, and we leave a more detailed investigation to future work.
\end{remark}

\subsection{The 
% necessity of an e-process to test the 
insufficiency of test (super)martingales for the
composite strong law}\label{sec:necessity}

Consider any $\Pcal$ that satisfies the conditions of Theorem~\ref{T_LLN}: it contains distributions having a finite mean and satisfying the uniform integrability condition~\eqref{T_LLN_eq1}. The composite strong law states that $\outerLocal(A_{\textnormal{div}})=0$, meaning that the ``erratic'' sequences --- those on which the empirical averages do not converge to a limit --- form a $\outerLocal$-nullset. Theorem~\ref{T_e_process} in turn constructs an explicit e-process $(E_t)_{t \in \N_0}$ --- a nonnegative process such that $\E_\P[ E_\tau ] \leq 1$ for any $\P \in \Pcal$ and stopping time $\tau$ --- that grows to infinity on any erratic sequence. Further, $(\mathbf{1}_{\{E_t > 1/\alpha\}})_{t \in \N_0}$ yields a level-$\alpha$ sequential test for the null 
 hypothesis that the empirical average of the observations converges to some limit.

One important question is whether an e-process was necessary for this purpose, or if a nonnegative martingale (with respect to $\Pcal$) beginning at one would have sufficed. In Ville's original result, $\Pcal$ was a singleton, and hence such martingales do exist and were explicitly constructed. However, in our setting, the question is whether a composite $\Pcal$-martingale exists, which would satisfy the martingale property for \emph{every} $\P \in \Pcal$. 

The answer is an emphatic no: for any sufficiently rich $\Pcal$, there is no nonnegative $\Pcal$-martingale that can grow to infinity on $\outerLocal$-nullsets. In fact, it is possible that the only nonnegative $\Pcal$-martingales are constants (and if we fix their initial value to one, then it must simply remain at one). This fact is not obvious. Ramdas et al.~\cite{ramdas2021testing} prove that for $\Pcal^{01}=\{\text{Ber}(p)^\infty : p \in [0,1]\}$ on the space $\Omega = \{0,1\}^\N$, meaning the set of all possible i.i.d.\ Bernoulli sequences, the only nonnegative $\Pcal^{01}$-martingales are constants. 

If (for example) $\Omega=\mathbb{R}^\mathbb{N}$, then as long as  $\Pcal^{01} \subset \Pcal$, there can still be no nonnegative $\Pcal$-martingale that can grow to infinity on $A_{\text{div}}$. Indeed, $A_{\text{div}}$ contains many binary sequences; on these, no $\Pcal$-martingale can grow, since no $\Pcal^{01}$-martingale can grow.

If the constraint of being a martingale for every $\P \in \Pcal$ is too hard to meet, one could ask if it can be relaxed to a nonnegative $\Pcal$-supermartingale. The answer again is no. The aforementioned paper also proves that the only nonnegative $\Pcal^{01}$-supermartingales are decreasing sequences, and thus the same conclusion holds in our setting. 

However, in that paper, the authors show that it is possible to construct a nontrivial e-process that tests $\Pcal^{01}$ (equivalently, since the e-process property is unaffected by taking convex combinations of distributions, it tests whether the distribution of the data is exchangeable), and that such an e-process is not condemned to stay small: it increases to infinity under a variety of alternatives, such as Markovian ones.

Unlike nonnegative $\Pcal$-(super)martingales, the $\Pcal$-e-process constructed by Theorem~\ref{T_e_process} is also not condemned to stay small, it increases to infinity on any erratic sequence. The above paragraphs demonstrate that e-processes are not only sufficient, but also (in some sense) necessary, in the sense that existing standard martingale concepts are not satisfactory in such richly composite settings.

\section{Summary}

This paper derived a composite generalization of Ville's fundamental martingale theorem that relates measure-theoretic and game-theoretic probability for single distributions. This required the development of an inverse price outer measure of any event, which is the appropriate generalization of the probability of that event in the singleton case. It also required the concept of an e-process, which generalizes an optional stopping property of martingales and supermartingales, and appropriately extends those fundamental concepts to the composite setting. Then, we proved that for any event of inverse price measure zero, there exists an e-process which increases to infinity if that event occurs. Said differently, our results give a rigorous way to find a fair price for indicators of events in a composite setting. 

There exist composite sets $\Pcal$ for which a nonnegative (super)martingale could have sufficed, and e-processes are not required. Examining interesting special classes $\Pcal$, and understanding when supermartingales do or do not suffice, has been the subject of much intense research recently \cite{howard_exponential_2018,howard_uniform_2019,grunwald_safe_2019,ramdas2020admissible,ramdas2021testing,koolen2022log}. In this paper, we show that the composite strong law of large numbers provided a setting where e-processes were necessary and sufficient. 
Future work could extend game-theoretic constructions for the law of the iterated logarithm \cite{miyabe2013law,sasai2019erdHos} to composite settings.

We end with a quote by Ville himself in an interview from the 1980s \cite{crepel2009jean}, reflecting on his seminal work:
\begin{quote}
    My goal, given the $X_n$ and a set to which
the sequence of $X_n$ belongs with probability zero, was to define \emph{(martingales)} $s_n$ so that
they tend to infinity on that set. I insist on the point because it took me so
long to make this way of proceeding understood.
\end{quote}
Our work extends Ville's research program to the realm of composite, nonparametric sets of distributions, and expands the deep connections between measure-theoretic and game-theoretic probability, as developed by~\cite{shafer2001probability,shafer2019game}.

%%%%%%%%%%%%%%%%%%%%%%%%%%%%%%%%%%%%%%%%%%%%%%%%%%%%%%%%%%%%%%%%%%%
%%                                                               %%
%% Use the two commands below for producing your bibliography    %%
%% with bibtex, then comment again the commands and include the  %%
%% content of the .bbl file in this file below the commands.     %%
%%                                                               %%
%%%%%%%%%%%%%%%%%%%%%%%%%%%%%%%%%%%%%%%%%%%%%%%%%%%%%%%%%%%%%%%%%%%

\bibliographystyle{amsplain}
\bibliography{bibs}     

% add below the content of your .bbl file produced by bibtex.

%%%%%%%%%%%%%%%%%%%%%%%%%%%%%%%%%%%%%%%%%%%%%%%%%%%%%%%%%%%%%%%%%%%
%%                                                               %%
%% You may add acknowledgments (optional).                       %%
%%                                                               %%
%%%%%%%%%%%%%%%%%%%%%%%%%%%%%%%%%%%%%%%%%%%%%%%%%%%%%%%%%%%%%%%%%%%
%\begin{acks}
%We are grateful to Martin Hairer who provided a nice \texttt{MR} macro and to S\'ebastien Gou\"ezel for his %useful comments on the internals of the class file.
%\end{acks}

%%%%%%%%%%%%%%%%%%%%%%%%%%%%%%%%%%%%%%%%%%%%%%%%%%%%%%%%%%%%%%%%%%%
%%                                                               %%
%% You have reached the end of your document.                    %%
%%                                                               %%
%%%%%%%%%%%%%%%%%%%%%%%%%%%%%%%%%%%%%%%%%%%%%%%%%%%%%%%%%%%%%%%%%%%

\end{document}